\documentclass[12pt]{amsart}
\usepackage{latexsym,amsmath,amsfonts,amscd,amssymb}
\usepackage{graphics}
\usepackage[all]{xy}
\usepackage{xcolor}
\newtheorem{theorem}{Theorem}[section]
\newtheorem{lemma}[theorem]{Lemma}
\newtheorem{proposition}[theorem]{Proposition}
\newtheorem{corollary}[theorem]{Corollary}

\newtheorem{definition}[theorem]{Definition}

\theoremstyle{remark}

\newcommand{\Div}{\operatorname{Div}}

\newcommand{\Ker}{\operatorname{Ker}}

\newcommand{\cA}{{\mathcal A}}

\newcommand{\cD}{{\mathcal D}}

\newcommand{\cO}{{\mathcal O}}

\newcommand{\CC}{{\mathbb C}}

\newcommand{\NN}{{\mathbb N}}

\newcommand{\QQ}{{\mathbb Q}}
\newcommand{\RR}{{\mathbb R}}

\newcommand{\ZZ}{{\mathbb Z}}

\title{A natural e\~ne product construction of the Big Witt ring}

\subjclass[2020]{08A02, 13F35. Secondary: 13A99, 13F25, 05E05.}
\keywords{E\~ne ring, Witt ring, Symmetric functions, transalgebraic theory.}

\author[D. Barsky]{Daniel Barsky}
 \address{7 rue La Condamine, 75017 Paris, France.}
 \email{barsky.daniel@orange.fr}

\author[R. P\'{e}rez-Marco]{Ricardo P\'{e}rez-Marco}
\address{CNRS, IMJ-PRG, Universit\'e Paris Cit\'e,
B{\^a}t. Sophie Germain, Paris, France.}
\email{ricardo.perez.marco@gmail.com}

\author[J.-P. Ramis]{Jean-Pierre Ramis}
\address{Acad\'emie des Sciences, Paris and Institut de Math{\'e}matiques de Toulouse, $118$ route de Narbonne, Toulouse cedex 9, France.}
\email{ramis.jean-pierre@wanadoo.fr}

\begin{document}

\begin{abstract}
We give a  straighforward, self-contained, and natural construction of the Big Witt
ring using the e\~ne product that is defined through the action
on zeros of polynomials. This is  in contrast with classical
constructions of the Big Witt ring
using formulas out of nowhere.
\end{abstract}

\maketitle



\subsection{Introduction}

The e\~ne product of a commutative ring $A$ was defined by the second author
through the action on divisors of polynomials \cite{PM} with the
purpose of studying  its analytical properties when extended to transcendental functions
(\cite{PM2}, \cite{PM3}).
Although it is not obvious from its definition, it turns
out that the e\~ne product is a
twisted form of the multiplication in the Big Witt ring.
In this way, the definition of the e\~ne product from \cite{PM}
provides a novel and straightforward construction
of the Big Witt ring that we present in this article.
This construction shows the action as a twisted convolution
of the multiplication of the Big Witt ring
on divisors of power series representing meromorphic functions. This important fact has been surprisingly
overlooked in the classical treatments. A notable exception is Bergman \cite{Ber} Appendix B where the observation is made, but not used. It is also nearly missed in the exercises in the Commutative Algebra volumes of Bourbaki \cite{Bou} Chapter IX (exercises 42 to 46).  
Manin considers the e\~ne product, which he names ``tensor product'' and denotes by $f\otimes g$, in algebraic groups in relation with the
tensor product used in \'Etale Cohomology, but, inexplicably misses the relation with the Big Witt ring (see \cite{Ma}). In relation with this, he
presents Kurokawa's proposition for  tensoring global zeta functions, which comes from an incomplete
form of the e\~ne product \cite{Ku}. The e\~ne product
defines an e\~ne ring structure on the multiplicative group of split polynomials with constant coefficient $1$ 
and is continuous for the Krull topology. We construct a continuous extension  to
$\cA(A) = 1+XA[[X]]$ that is a twist of the Big Witt ring. The split polynomials are dense in $\cA(A)$ 
when for example $A$ is an algebraically closed field, but this is not true in general. The usual constructions
of the Big Witt ring use formulas out of nowhere. In this sense the construction presented here seem more natural. The construction is
close to the philosophy of the elegant construction by Lenstra \cite{Len}, with
simplifications coming from the use of the e\~ne product in a ring of variables.

\textbf{Structure of the article.}
In sections 2 we define the e\~ne product on finite divisors on a monoid or on a partial monoid that gives them  a ring structure. 
In section 3 we define the e\~ne product for split polynomials
over a commutative unitary ring $A$ and its relation to the e\~ne product on divisors supported on the monoid $A$ or on the 
partial monoid $A^*$ (the two approaches are presented).
In section 4 we construct the e\~ne ring structure
$(\cA(A), ., \star)$ where $\cA(A)$ is the multiplicative group of formal
power series $1+XA[[X]]$ for an arbitrary ring $A$. In section 5 we introduce Big Witt rings 
and state the main Theorem. In section 6 we identify the Big Witt ring with the twist of the e\~ne ring. 
In section 7 we single out the action of the Witt
multiplication on divisors. Section 8 is devoted to the density problem 
and we provide counterexamples. 
In section 9 we discuss the functorial properties. In section 10 we give 
a short, less than two pages long,  self-contained construction of the Big Witt ring
using the previous ideas, but avoiding the divisor discussion. We conclude in section 11 with 
a brief historical discussion of the origins of the Big Witt ring.

\subsection{E\~ne ring on finite divisors.}

Let $(G,.)$ be a magma, that is, just  a set $G$ with a binary operation.
Recall that this is a semigroup if the binary operation is associative, and
it is a monoid if the binary operation also has a neutral element.
The space of divisors $\cD(G)$
is the $\ZZ$-module of linear combinations $\delta=\sum_{g\in G} n_g .(g) $
where $n_g\in \ZZ$ is called the multiplicity (or coefficient) of $g\in G$.
The multiplication of divisors is by definition the additive structure of
this $\ZZ$-module and
$(\cD(G), .)$ is a group whose neutral element is the zero divisor ($n_g=0$ for
all $g\in G$), and \footnote{We denote multiplicatively the sum of divisors thinking about the multiplication 
of the functions that generate the divisors. This convention is also compatible with the multiplicative notation for e\~ne and
Witt rings.}
$$
\delta.\eta =\sum_{g\in G} (n_g+m_g) . (g) \ .
$$
The group of divisors $(\cD(G), .)$ is an ordered group (Bourbaki Alg\`ebre II, Chapter VI, \$ 1, 1
in \cite{Bou2}) 
and the positive cone is the monoid of positive divisors $(\cD^+(G), .)$ with non-negative  multiplicities 
$n_g\geq 0$.

The convolution or e\~ne product of divisors is defined by
\begin{equation} \label{eq:convolution}
\delta\star_G \eta =\sum_{g\in G} \left (\sum_{g_1.g_2=g} n_{g_1}m_{g_2} \right ) .(g)
\end{equation}
where the multiplicity of $g\in G$ is $0$ if there is no pair
$(g_1, g_2)\in G^2$ such that $g_1.g_2=g$.
The convolution is a binary operation for finite divisors $\cD_0(G)$, and for positive 
finite divisors $\cD_0^+(G)$. The structure $(\cD_0(G),\star_G)$ is a magma and $(\cD_0^+(G),\star_G)$
is a submagma.
The magmas $(\cD_0(G),\star_G)$ and $(\cD_0^+(G),\star_G)$  are 
associative, resp. commutative, if the binary operation of
the magma $(G, .)$ is associative, resp. commutative. 
When $(G,.)$ is a monoid, then $(\cD_0(G), \star_G)$
and $(\cD_0^+(G), \star_G)$ are monoids with neutral element the divisor 
$\delta_e =(e)$ associated to 
the neutral element $e\in G$.

\begin{proposition}[E\~ne ring structure on finite divisors on a semigroup]
\footnote{It turns out that this is the same as $\ZZ G$, the monoid ring of $G$ over $\ZZ$.} \label{prop:ene_finite_divisors}
If $(G,.)$ is a semigroup, then we have a ring structure
$(\cD_0(G), .,  \star_G)$. This is the e\~ne ring associated to the semigroup
$(G,.)$. 
The e\~ne ring is commutative if the semigroup is commutative. If $(G,.)$ is a monoid, then 
the e\~ne ring $(\cD_0(G), .,  \star_G)$ is unitary and the unit element is
the divisor $\delta_e =(e)$ associated to the neutral element $e\in G$.

If $(G,.)$ is a semigroup then we have a semiring structure on positive divisors 
$(\cD_0^+(G), .,  \star_G)$. This is the e\~ne semiring associated to the semigroup 
$(G,.)$. Also $(\cD_0(G), .,  \star_GS)$ is an ordered ring with positive cone $\cD_0^+(G)$.
\end{proposition}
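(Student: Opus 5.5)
The plan is to verify the ring axioms directly from the definition \eqref{eq:convolution}, working in the $\ZZ$-module $\cD_0(G)$ with its basis of elementary divisors $(g)$, $g\in G$. First I will observe that $(\cD_0(G),.)$ is already an abelian group: it is the free $\ZZ$-module on $G$, written multiplicatively. Next I will check that $\star_G$ is $\ZZ$-bilinear. The multiplicity of $g$ in $\delta\star_G\eta$, namely $\sum_{g_1.g_2=g} n_{g_1}m_{g_2}$, is a finite sum because $\delta$ and $\eta$ have finite support, and it is visibly bilinear in the pairs $(n_\cdot)$, $(m_\cdot)$. Bilinearity gives both distributive laws $(\delta.\delta')\star_G\eta=(\delta\star_G\eta).(\delta'\star_G\eta)$ and $\eta\star_G(\delta.\delta')=(\eta\star_G\delta).(\eta\star_G\delta')$. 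It also shows that $\star_G$ is determined by its values on basis elements, where $(g_1)\star_G(g_2)=(g_1.g_2)$.

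Associativity then reduces by trilinearity to basis elements, where $((g_1)\star_G(g_2))\star_G(g_3)=((g_1.g_2).g_3)=(g_1.(g_2.g_3))=(g_1)\star_G((g_2)\star_G(g_3))$ by associativity in $G$. Commutativity follows in the same way from $g_1.g_2=g_2.g_1$. If $e$ is neutral, then $(e)\star_G(g)=(g)=(g)\star_G(e)$, and bilinearity makes $\delta_e$ a two-sided unit. These facts are already asserted in the discussion before the proposition, so this part only needs to be recorded. The point needing care is the passage from basis elements to arbitrary divisors. I will handle it by writing the triple product explicitly as $\sum_g\bigl(\sum_{(g_1.g_2).g_3=g} n_{g_1}m_{g_2}p_{g_3}\bigr).(g)$ and comparing it with the analogous expression for the other bracketing. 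The two index sets coincide by associativity in $G$, which makes the argument independent of any appeal to a universal property. This also identifies $\cD_0(G)$ with the monoid ring $\ZZ G$.

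For positive divisors, nonnegative multiplicities are stable under sums and under the sums of products in \eqref{eq:convolution}. So $\cD_0^+(G)$ is closed under $.$ and $\star_G$, and it contains the zero divisor and $\delta_e$ when $e$ exists. The semiring axioms are inherited from the ring by restriction. For the ordered ring statement, I will take the positive cone to be $\cD_0^+(G)$, with $\delta\le\eta$ iff $\eta.\delta^{-1}\in\cD_0^+(G)$, which is the componentwise order of Bourbaki. It is a partial order compatible with $.$ because $\cD_0^+(G)$ is a submonoid with $\cD_0^+(G)\cap(\cD_0^+(G))^{-1}=\{0\}$. Compatibility with $\star_G$ amounts to $\cD_0^+(G)\star_G\cD_0^+(G)\subset\cD_0^+(G)$, which was just shown.

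I expect no real obstacle here. The only step needing care is the meaning of ``ordered ring'': the order is partial rather than total, and only the cone axioms hold, so I will state explicitly that ``ordered'' is meant in the sense of a positive cone closed under both operations.
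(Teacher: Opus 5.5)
Your proposal is correct and is essentially the paper's argument: the paper gives no separate proof. It relies on the preceding remarks that associativity, commutativity and the unit $(e)$ pass from $G$ to $\star_G$, and on the footnote identifying $(\cD_0(G),.,\star_G)$ with the monoid ring $\ZZ G$; your bilinearity-plus-basis verification and your cone argument for the (partial, Bourbaki-style) ordered ring and the semiring spell exactly this out. The one detail worth adding is that $\delta\star_G\eta$ is supported in the finite set of products $g_1.g_2$ with $n_{g_1}m_{g_2}\neq 0$, so $\star_G$ indeed lands in $\cD_0(G)$.
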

We recall that a semiring (or hemiring) satisfies the same axioms of a 
ring except for the additive structure that is has a semigroup structure instead 
of a group structure (see \cite{Go}). For the definition of ordered ring see Bourbaki Alg\`ebre II, Chapter VI, 2, 1, Definition 1 in \cite{Bou2}.

\medskip

\textbf{Divisors on a partial magma.}

\medskip

A partial magma $(G,.)$ is a set with a partial binary operation defined in a subset 
$U\subset G\times G$, that is, a map 
$U\subset G\times G \to G$. The structure $(G,.)$ is a 
partial semigroup if we have associativity:  for $x,y, z \in G$ it is 
equivalent that $x.y$ and $(x.y).z$ both exist and that $y.z$ and $x.(y.z)$ both exist, and it that case we have
$$
(x.y).z = x.(y.z)
$$
and we denote by $x.y.z$ these products. The partial magma $(G,.)$ has a neutral element $e\in G$ if for any $x\in G$
we have that both $e.x$ and $x.e$ exist and 
$$
e.x=x.e=e \ .
$$

\textbf{Example.} If $A$ is a ring, then $A^*=A-\{0\}$ is a partial magma for the multiplication in $A$
defined on those pairs $(a,b)\in A^*$ such that $a.b\not=0$. The partial magma $(A^*,.)$ is a magma 
if and only if $A$ has no divisors of $0$.

\medskip

The construction of the e\~ne ring of divisors of a magma extends to the case when $(G,.)$ is a partial magma.
The definition is the same except for the convolution formula (\ref{eq:convolution}) where the summation only 
extends to those $(g_1, g_2)\in G^2$ such that $g_1.g_2$ exists (and the empty sum is $0$). We have 
that Proposition \ref{prop:ene_finite_divisors} extends to partial semigroups.

\begin{proposition}[E\~ne ring structure on finite divisors on a partial semigroup] \label{prop:ene_finite_divisors_partial}
If $(G,.)$ is a partial semigroup, then we have a ring structure
$(\cD_0(G), .,  \star_G)$. This is the e\~ne ring associated to the partial semigroup
$(G,.)$. 
The e\~ne ring is commutative if the partial semigroup is commutative. If $(G,.)$ is 
a partial monoid, then 
the e\~ne ring $(\cD_0(G), .,  \star_G)$ is unitary and the unit element is
the divisor $\delta_e =(e)$ associated to the neutral element $e\in G$.

If $(G,.)$ is a partial semigroup then we have a semiring structure on positive divisors 
$(\cD_0^+(G), .,  \star_G)$. This is the e\~ne semiring associated to the semigroup 
$(G,.)$. Also $(\cD_0(G), .,  \star_G)$ is an ordered ring with positive cone $\cD_0^+(G)$.
\end{proposition}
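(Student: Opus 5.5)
The plan is to check the ring axioms directly on the convolution formula (\ref{eq:convolution}), restricted to pairs whose product exists, following the proof of Proposition \ref{prop:ene_finite_divisors}. The additive structure $(\cD_0(G),.)$ is already a free abelian group. The product is well defined: for finite $\delta,\eta$ only finitely many pairs $(g_1,g_2)$ have $n_{g_1}m_{g_2}\neq 0$, so $\delta\star_G\eta$ is a finite divisor. Bilinearity (distributivity over the sum of divisors) is immediate, since each coefficient $\sum_{g_1.g_2=g} n_{g_1}m_{g_2}$ is bilinear in the coefficient families. By bilinearity, associativity, commutativity and the unit property only need to be checked on generators $(x),(y),(z)$.

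On generators, $(x)\star_G(y)=(x.y)$ if $x.y$ exists and $0$ otherwise. Hence $((x)\star_G(y))\star_G(z)$ equals $(x.y.z)$ if $x.y$ and $(x.y).z$ both exist, and $0$ otherwise. Symmetrically, $(x)\star_G((y)\star_G(z))$ equals $(x.y.z)$ if $y.z$ and $x.(y.z)$ exist, and $0$ otherwise. The partial associativity axiom says exactly that these two existence conditions are equivalent and that the resulting elements coincide. So the two triple products agree in every case, including the case where both vanish.

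This generator-level check is the only place where partiality matters, and it is the main point to be careful about. One must not expand $\sum_{g_1.g_2=g}$ naively by reindexing over triples. Instead, write $(\delta\star\eta)\star\zeta$ as the sum, over triples $(x,y,z)$ for which $x.y$ and $(x.y).z$ exist, of $n_xm_yp_z\,(x.y.z)$. Do the same for the other bracketing and identify the two index sets using the axiom.

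Commutativity follows because $x.y$ exists if and only if $y.x$ exists, and then they are equal; this is the intended meaning of a commutative partial semigroup. For the unit, $e.x$ and $x.e$ always exist and equal $x$. The definition in the text reads $e.x=x.e=e$, which is evidently a typo for $x$. It follows that $(e)\star_G(x)=(x)=(x)\star_G(e)$, and linearity extends this to all divisors.

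For the semiring and ordered statements, note that if all $n_g,m_g\ge 0$ then all convolution coefficients are sums of products of non-negative integers. So $\cD_0^+(G)$ is closed under both operations, and the axioms are inherited from $\cD_0(G)$. For the ordered ring claim in the sense of Bourbaki, $\cD_0^+(G)$ is the positive cone of the ordered group $(\cD_0(G),.)$. It is stable under the sum of divisors and, as just shown, under $\star_G$, which is the required compatibility $\delta,\eta\ge 0\Rightarrow \delta\star_G\eta\ge 0$.
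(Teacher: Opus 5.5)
Your proof is correct and is the routine verification the paper leaves implicit. The paper gives no argument beyond asserting that Proposition~\ref{prop:ene_finite_divisors} extends to partial semigroups; your bilinear reduction to generators, where the partial associativity axiom matches the two bracketings exactly (including the case where both vanish), together with your readings $e.x=x.e=x$ and ``$x.y$ exists iff $y.x$ exists'', is exactly what that extension requires. Equivalently, you could declare undefined products equal to an adjoined absorbing element $0$; by the same axiom $G\cup\{0\}$ is then a genuine semigroup, and $\cD_0(G)$ is the quotient of its e\~ne ring by the ideal $\ZZ.(0)$, which reduces the ring axioms directly to Proposition~\ref{prop:ene_finite_divisors}.
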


\subsection{E\~ne ring structure associated to a commutative ring.}

We consider an unitary commutative ring $A$ and the 
multiplicative group of formal power series $\cA(A)=1+XA[[X]]$.
We consider also the commutative multiplicative
semigroup $P_A\subset 1+XA[X]\subset \cA(A)$ of \textit{split polynomials}
with constant coefficient $1$. So for $f\in P_A$ we can write
$$
f(X)=\prod_{a\in A} (1-aX)^{n_a}
$$
with $n_a\in \NN$, and almost all $n_a=0$. 
Note that the split factorization is never unique as $n_0$ can be arbitrary and we can always 
multiply by a positive integer power of the 
constant polynomial equal to $1$. Of course, 
the non-uniqueness can be worse when $A$ has zero divisors. 
Let $S_A$ be the 
multiplicative subgroup $S_A \subset \cA(A)$ generated by $P_A$. We name $S_A$
the space of \textit{split ``rational'' functions}. We can write 
any element $f(X)\in S_A$ as a finite product
$$
f(X)=\prod_{a\in A} (1-aX)^{n_a}
$$
with $n_a\in \ZZ$, and almost all $n_a=0$. Note that 
$$
(1-aX)^{-1}=1+\sum_{k\geq 2} a^k X^k \in \cA(A) \ .
$$

We can consider divisors with support on $A^*$ considering the partial monoid structure 
$(A^*,.)$ (the convention is that $a.b$ does not exist in $A^*$ when $a.b=0$ in $A$),
or divisors with support  on $A$ considering the monoid structure $(A,.)$.
We start considering the  first case that geometrically is more natural, but maybe algebraically 
less orthodox.

\medskip

\textbf{Divisors with support on the partial monoid $(A^*,.)$.}

\medskip

The following Proposition is obvious from the definition of $P_A$.

\begin{proposition}[Positive divisor map]
The positive divisor map $\varphi_{A^*}^+: \cD_0^+(A^*) \to P_A\subset A[X]$ defined by 
$$
\delta =\sum_{a\in A^*} n_a . (a) \mapsto f(X)=\prod_{a\in A^*} (1-aX)^{n_a}
$$
is a surjective semigroup morphism. 
\end{proposition}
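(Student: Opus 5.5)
The plan is to check three things directly from the definitions: that $\varphi_{A^*}^+$ is well defined, that it turns the divisor multiplication into multiplication of polynomials, and that every element of $P_A$ is reached.

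\textbf{Well-definedness.} A finite positive divisor $\delta=\sum_{a\in A^*} n_a.(a)$ has $n_a\in\NN$ and only finitely many $n_a\neq 0$. Hence $\prod_{a\in A^*}(1-aX)^{n_a}$ is a finite product of polynomials of the form $1-aX$. Each such polynomial has constant coefficient $1$, so the product is a polynomial in $1+XA[X]$. By definition of $P_A$ it is a split polynomial, so $\varphi_{A^*}^+(\delta)\in P_A$. Since $A[X]$ is commutative, the value does not depend on the order of the factors.

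\textbf{Morphism property.} The operation ``$.$'' on divisors is addition of multiplicities. Take $\delta=\sum n_a.(a)$ and $\eta=\sum m_a.(a)$. Using commutativity of $A[X]$ and the exponent rule $(1-aX)^{n_a}(1-aX)^{m_a}=(1-aX)^{n_a+m_a}$, we get
$$
\varphi_{A^*}^+(\delta.\eta)=\prod_{a\in A^*}(1-aX)^{n_a+m_a}=\varphi_{A^*}^+(\delta)\,\varphi_{A^*}^+(\eta).
$$
The zero divisor is sent to the empty product, which is the constant polynomial $1$. So $\varphi_{A^*}^+$ is a morphism of semigroups, and in fact of monoids.

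\textbf{Surjectivity.} Let $f\in P_A$, written as $f=\prod_{a\in A}(1-aX)^{n_a}$ with almost all $n_a=0$. The only point needing a remark is that this factorization may include $a=0$. But the corresponding factor is $(1-0\cdot X)^{n_0}=1$, so it can be dropped. Then $f=\prod_{a\in A^*}(1-aX)^{n_a}=\varphi_{A^*}^+\bigl(\sum_{a\in A^*}n_a.(a)\bigr)$.

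None of these steps is a real obstacle. The only subtlety is this removal of the factor at $0$, which is exactly why the image of divisors supported on $A^*$ is all of $P_A$. The map is not claimed to be injective; when $A$ has zero divisors, non-uniqueness of factorization genuinely occurs. The partial-monoid structure of $A^*$ plays no role here, because only the additive structure of $\cD_0^+(A^*)$ is involved.
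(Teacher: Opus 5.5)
Your proof is correct and is the routine verification the paper intends: it states the proposition as ``obvious from the definition of $P_A$'' and gives no further proof. The one point worth making explicit, which you do, is that factors $(1-0\cdot X)^{n_0}=1$ can be discarded, so divisors supported on $A^*$ already reach every element of $P_A$.
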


\begin{lemma}
If $A$ has no zero divisors then  $\varphi_{A^*}^+$ is an isomorphism of semigroups, $\Ker \varphi_{A^*}^+ =\{0\}$.
\end{lemma}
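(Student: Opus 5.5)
Since $\varphi_{A^*}^+$ is already known to be a surjective semigroup morphism by the preceding Proposition, it only remains to prove injectivity. A semigroup morphism need not be injective just because its kernel is trivial, so I will prove injectivity directly. That is, I will show that if two positive finite divisors $\delta=\sum n_a.(a)$ and $\eta=\sum m_a.(a)$ on $A^*$ satisfy
$$
\prod_{a\in A^*}(1-aX)^{n_a}=\prod_{a\in A^*}(1-aX)^{m_a},
$$
then $n_a=m_a$ for all $a\in A^*$. The statement $\Ker\varphi_{A^*}^+=\{0\}$ then follows at once: the image of the zero divisor is the polynomial $1$, and the argument below shows nothing else maps to $1$.

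The plan is to pass to the fraction field $K$ of the integral domain $A$ and then to an algebraic closure $\bar K$. The polynomial ring $\bar K[X]$ is a UFD, and since $a\neq 0$ each factor $1-aX=-a(X-a^{-1})$ is a unit times the monic linear polynomial $X-a^{-1}$. Distinct $a\in A^*$ give distinct roots $a^{-1}\in K^*$. Therefore the multiplicity of the root $a^{-1}$ in $f=\varphi_{A^*}^+(\delta)$ is exactly $n_a$.

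This can also be done without unique factorization. Induct on the degree $\sum n_a$, which equals $\deg f$ because the leading coefficient $\prod(-a)^{n_a}$ is nonzero, $A$ having no zero divisors. Given the equality of products, pick $a$ with $n_a>0$ and evaluate at $X=a^{-1}$ in $K$. The left side vanishes, so some factor $1-ba^{-1}$ on the right vanishes, forcing $b=a$ and $m_a>0$. Then cancel one factor $1-aX$, which is legitimate because $K[X]$ is a domain, and apply the induction hypothesis.

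The only step needing care is the role of the hypothesis. Without it, the leading coefficient could vanish, and factorizations could be non-unique; for instance, over $\ZZ/4$ one has $(1-2X)^2=1$. The domain assumption is used exactly in the degree count and in the cancellation or unique factorization step, and I expect no real obstacle beyond stating this cleanly. Note that the multiplicity $n_0$ causes no ambiguity here, since $0\notin A^*$; this is precisely why divisors are taken on $A^*$ rather than on $A$.
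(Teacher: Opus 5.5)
Your proof is correct, and it proves more than the paper's argument does.

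\textbf{What the paper does.} The paper only checks the kernel statement. If $\delta=\sum_{k=1}^m n_{a_k}.(a_k)$ with $m\geq 1$ and all $a_k\neq 0$ maps to $1$, then the top coefficient $\prod_k(-a_k)^{n_{a_k}}$ of $\varphi_{A^*}^+(\delta)$ would have to vanish, which is impossible in a domain. The paper then reads off the isomorphism from $\Ker\varphi_{A^*}^+=\{0\}$.

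\textbf{Why that is not enough.} As you point out, a trivial kernel does not give injectivity for a morphism of monoids. Injectivity of $\varphi_{A^*}^+$ amounts to triviality of the kernel of the extended group morphism $\cD_0(A^*)\to S_A$. That means ruling out equalities $\prod(1-aX)^{n_a}=\prod(1-bX)^{m_b}$ between two nonzero positive divisors. A leading-coefficient count cannot separate such divisors when they have the same degree and the same product of points. For example, $(1-2X)(1-3X)$ and $(1-X)(1-6X)$ over $\ZZ$ agree in degree and top coefficient, and only the middle coefficient distinguishes them.

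\textbf{What your argument adds.} You supply the missing ingredient: unique factorization in $K[X]$, or equivalently your induction by evaluation at $a^{-1}$ followed by cancellation. This recovers each multiplicity $n_a$ as the order of the root $a^{-1}$. The paper's degree argument is essentially the base case of your induction. The paper's route gains brevity; yours actually establishes the ``isomorphism of semigroups'' part of the statement.

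\textbf{A small remark.} Passing to $\bar K$ is unnecessary. All the roots $a^{-1}$ already lie in $K$, and $K[X]$ is a UFD.
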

\begin{proof}
Consider  $\delta \in \Ker \varphi_{A^*}^+$, with $\delta =\sum_{k=1}^m n_{a_k} . (a_k)$ given 
by a finite formal sum of $m\geq 0$ non-zero terms, $a_k\not=0$. If $m\geq 1$ , then looking at 
the coefficient of $X^n$ we have
$a_1\ldots a_m=0$. Since $A$ has no zero divisors, we have for some $1\leq k_0 \leq m$, $a_{k_0}=0$. Therefore $m=0$ and 
$\Ker \varphi_{A^*}^+ =\{ 0\}$.
\end{proof}

In general when $A$ has divisors of $0$,  $\varphi_{A^*}^+$ is not an isomorphism
but we can define the e\~ne product on $P_A$ by the following procedure:

\begin{theorem}[E\~ne product on $P_A$] \label{thm:well_defined}
Given $f, g\in P_A$, we choose $\delta_f, \delta_g \in  \cD_0^+(A^*)$, then we 
define 
$$
f\star g = \varphi_A^+(\delta_f\star_{A^*} \delta_g)
$$
that is,
\begin{equation}\label{eq:formula1}
f\star g = \prod_{\exists a.b\in A^*} (1-abX)^{n_a m_b}=\prod_{c=a.b\in A^*} (1-cX)^{\left (\sum_{a.b=c} n_a.m_b \right )}
\end{equation}
and the result does not depend on the choices of $\delta_f$ and  $\delta_g$.
\end{theorem}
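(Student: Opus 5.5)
The plan is to show that the right-hand side of (\ref{eq:formula1}) can be written purely in terms of the coefficients of $f$ and $g$, via the fundamental theorem on symmetric polynomials. First I would rewrite the product without multiplicities. Write $\delta_f=\sum n_a.(a)$ and list the points of its support with multiplicity as $a_1,\dots,a_n\in A^*$, so that $f(X)=\prod_{i=1}^n (1-a_iX)$. Do the same for $\delta_g$, giving $b_1,\dots,b_m$ with $g(X)=\prod_{j=1}^m(1-b_jX)$. The definition of $\star_{A^*}$ only keeps pairs with $a_ib_j\neq 0$. Since a pair with $a_ib_j=0$ would contribute the factor $1-0\cdot X=1$, we get
$$
\varphi_{A^*}^+(\delta_f\star_{A^*}\delta_g)=\prod_{i=1}^n\prod_{j=1}^m (1-a_ib_jX).
$$
This equality also settles the issue that the partial monoid $(A^*,.)$ is not a monoid when $A$ has zero divisors. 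Moreover, appending extra entries equal to $0$ to either list changes neither side of this identity nor the polynomials $f$ and $g$.

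Second, I would pass to the universal situation. In $\ZZ[x_1,\dots,x_N,y_1,\dots,y_N][X]$ consider $\Pi_N=\prod_{i,j\le N}(1-x_iy_jX)=\sum_k c_k X^k$. Each $c_k$ is symmetric in the $x$'s and separately symmetric in the $y$'s. Applying the fundamental theorem on symmetric polynomials twice (first over $\ZZ[y]$ in the $x$'s, then in the $y$'s), we get $c_k=P_{k,N}(e_1(x),\dots,e_N(x),e_1(y),\dots,e_N(y))$ for some $P_{k,N}$ with integer coefficients. Here the $e_r$ are the elementary symmetric polynomials.

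Third, I would specialize. Suppose $\delta_f,\delta_f'$ both map to $f$ and $\delta_g,\delta_g'$ both map to $g$. Take $N$ at least the maximum of all list lengths, and pad every list with zeros up to length $N$. For each list $a$ representing $f$ we have $f(X)=\prod(1-a_iX)=\sum_r(-1)^re_r(a)X^r$, so $e_r(a)$ equals $(-1)^r$ times the $X^r$-coefficient of $f$, which is $0$ for $r>\deg f$. Hence the values $e_r(a)$ agree for the two representations of $f$, and likewise for $g$. Evaluating $P_{k,N}$ via the ring morphism $\ZZ[x,y]\to A$ sending the variables to the padded lists shows that both choices produce the same coefficients. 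Therefore the product $\prod_{i,j}(1-a_ib_jX)$ depends only on $f$ and $g$, which proves the theorem.

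The only delicate point I expect is the bookkeeping of differing lengths $n\ne n'$. Representations can differ in length, and also by zero-divisor phenomena, since a nonzero $a$ may have $1-aX$ factoring differently. I would handle this uniformly by the zero-padding to a common $N$, which works because the elementary symmetric functions are read off from the coefficients of $f$ regardless of which factorization is chosen. No cancellation or unique factorization is ever used, so the argument is insensitive to zero divisors.
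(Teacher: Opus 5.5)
Your proof is correct and is essentially the paper's argument. The coefficients of $\prod_{i,j}(1-a_ib_jX)$ are integer polynomials, obtained by applying the fundamental theorem on symmetric functions twice, in the elementary symmetric functions of the two lists; these are, up to sign, the coefficients of $f$ and $g$, and hence do not depend on the chosen divisors.

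Your remark that pairs with $a_ib_j=0$ contribute only trivial factors, and your zero-padding to a common length $N$, are cleaner bookkeeping of the same idea. They let you avoid the stabilization in $n$ asserted in Lemma~\ref{lem:Fundamental}, which the paper only really needs later for the extension to $\cA(A)$.
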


\begin{proof}
Observe that the coefficients $(A_k)_{k\geq 1}$ of the expansion of the product
$$
f(X)=\prod_{a\in A} (1-aX)^{n_a}=1+A_1 X+A_2X^2+\ldots
$$
are the elementary symmetric functions evaluated in the $a$'s repeated with multiplicities, that we also name 
elementary symmetric functions of the associated divisor.
If we choose another divisors $\delta'_f$ that incarnates $f$  then the elementary symmetric 
functions of $\delta_f$ and  $\delta'_f$ give the same coefficients $(A_k)$ that  coincide. 
From  Lemma \ref{lem:Fundamental} in Section 3 we have that  the coefficients of
$f\star g$ are universal polynomial functions with integer coefficients of the elementary symmetric functions of 
$\delta_f$ and $\delta_g$, hence they are universal polynomial formulas with 
integer coefficients on the 
coefficients of $f$ and $g$. Therefore, the result 
is independent of the choices of $\delta_f$ and $\delta_g$. 
\end{proof}

\medskip

\textbf{Divisors with support on the monoid $(A,.)$.}

\medskip
The following Proposition is inmediate from the definition of $S_A$.

\begin{proposition}[Divisor map] 
The divisor map $\varphi_A: \cD_0(A) \to S_A$ defined by 
$$
\delta =\sum_{a\in A} n_a . (a) \mapsto f(X)=\prod_{a\in A} (1-aX)^{n_a}
$$
is a surjective group morphism. 

Its restriction to positive divisors $\varphi_A^+: \cD_0^+(A) \to P_A$ is a 
surjective morphism of semigroups.
\end{proposition}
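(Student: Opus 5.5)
We need to show two things: $\varphi_A$ is a group morphism from $(\cD_0(A),.)$ to $(S_A,.)$, and it is surjective onto $S_A$. We then need the same two facts for the restriction $\varphi_A^+$ onto $P_A$.

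First we check that $\varphi_A$ is well defined. A divisor $\delta=\sum_{a\in A} n_a.(a)\in\cD_0(A)$ has only finitely many non-zero $n_a$, so $\prod_{a\in A}(1-aX)^{n_a}$ is a finite product in the multiplicative group $\cA(A)=1+XA[[X]]$. Negative exponents make sense because each $1-aX$ is invertible there, with inverse $\sum_{k\ge 0}a^kX^k$. Such a finite product is an element of the subgroup $S_A$ generated by $P_A$, since each $1-aX$ lies in $P_A$.

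Next, the morphism property. Recall that multiplication in $\cD_0(A)$ is addition of multiplicities. Since $\cA(A)$ is commutative, the law of exponents $(1-aX)^{n_a}(1-aX)^{m_a}=(1-aX)^{n_a+m_a}$ holds for each $a$. Collecting factors over the finite union of the supports then gives
\[
\varphi_A(\delta.\eta)=\prod_{a\in A}(1-aX)^{n_a+m_a}=\varphi_A(\delta)\,\varphi_A(\eta).
\]
It also follows that the zero divisor maps to the empty product $1$, and that $\varphi_A(-\delta)=\varphi_A(\delta)^{-1}$.

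For surjectivity, the image of $\varphi_A$ is a subgroup of $S_A$ because $\varphi_A$ is a morphism. This image contains every generator $1-aX=\varphi_A((a))$, and every element of $P_A$ is a finite product $\prod(1-aX)^{n_a}$ with $n_a\ge 0$, hence equals $\varphi_A(\sum n_a.(a))$. The subgroup $S_A$ generated by $P_A$ is therefore contained in the image, so $\varphi_A$ is onto $S_A$.

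For the restriction, positive divisors have all $n_a\ge 0$, so their images are polynomials of the form $\prod(1-aX)^{n_a}$, which lie in $P_A$ by definition. The computation above, restricted to non-negative multiplicities, shows that $\varphi_A^+$ is a semigroup morphism. It is surjective because, by the definition of $P_A$, every $f\in P_A$ can be written as $\prod_{a\in A}(1-aX)^{n_a}$ with $n_a\in\NN$.

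There is no genuine obstacle in this argument. The only point needing care is that $\varphi_A$ is evaluated on divisors, where the representation is unique, and not on factorizations of $f$. Non-uniqueness of split factorizations, coming from $(1-0X)=1$ or from zero divisors, therefore only affects injectivity, which is not claimed here.
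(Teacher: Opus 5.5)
Your proof is correct and takes the same route as the paper, which simply calls the statement immediate from the definitions of $P_A$ and of $S_A$ as the subgroup generated by $P_A$. You have written out the routine checks the paper leaves implicit: the product is finite, exponents add, and the image is a subgroup containing $P_A$. You are also right that non-uniqueness of split factorizations affects only injectivity, not well-definedness.
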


Observe that, contrary to the precedent situation, even when $A$ has no divisors of $0$,  
we have that $\varphi_A^+$, resp. $\varphi_A$, is never an isomorphism since  
$\Ker \varphi_A^+$, resp.
$\Ker \varphi_A$,
always contains  $\NN .(0)$, resp. $\ZZ.(0)$. Note that $\ZZ.(0)=\bigl( (0) \bigl)$ is a principal 
ideal of the e\~ne ring
$(\cD_0(A),.,\star_A)$.

Again, we desire to define the e\~ne product $f\star g$ for $f,g\in P_A$, or for $f,g\in S_A$, 
by 
\begin{equation} \label{eq:ene_formula}
(f\star g)(X) = \prod_{a,b \in A} (1-ab X)^{n_a.m_b} = \prod_{c=a.b\in A^*} (1-cX)^{\left (\sum_{a.b=c} n_a.m_b \right )}
\end{equation}
but we need to justify that the result is independent of the choice of the 
presentation of $f(X)$ and $g(X)$ as split factorizations. When $A$ has no divisors of $0$ 
the same argument as before proves that $\Ker \varphi_A^+ = \NN .(0)$ and 
$\Ker \varphi_A = \ZZ .(0)$. Therefore, when $A$ has no divisors of $0$ 
the split factorizations of $f$ is  unique up to the freedom of choosing $n_0$ that does not change the product. 

For a general ring $A$, the independence of the definition of the e\~ne product 
of the choice of the divisors incarnating 
the functions in $P_A$ (Theorem \ref{thm:well_defined}) works as well, 
but not for $S_A$. Instead 
we can use the following  alternative 
argument due to Hendrik Lenstra if we insist in defining the e\~ne product for the 
full group $S_A$ (something that is not necessary for the construction in the next section, 
but is certainly satisfactory to have). It is noteworthy to observe that 
the argument avoids 
the use of elementary symmetric functions.

\begin{proposition}[H. Lenstra]
 The kernel  $\Ker \varphi_A$ is an ideal of the e\~ne ring $(\cD_0(A), ., \star)$, and the group isomorphism 
 $$
 \cD_0(A)/\Ker \varphi_A \approx S_A
 $$
 induces on $S_A$ a ring structure and formula (\ref{eq:ene_formula}) for 
 the e\~ne product is well 
 defined independently of the split factorizations.
\end{proposition}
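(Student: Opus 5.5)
The plan is to show directly that $\Ker \varphi_A$ absorbs e\~ne multiplication by any divisor. Once that is established, everything else in the statement is formal.

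\textbf{Reduction to generators.} Since $\star_A$ is biadditive on $\cD_0(A)$ (it is the multiplication of the monoid ring $\ZZ A$, Proposition \ref{prop:ene_finite_divisors}), it suffices to check that $(b)\star_A \delta \in \Ker\varphi_A$ for every $b\in A$ and every $\delta\in\Ker\varphi_A$. Indeed, write a general $\eta=\sum_b m_b.(b)$. Then $\eta\star_A\delta=\sum_b m_b.\bigl((b)\star_A\delta\bigr)$ in additive notation, and $\varphi_A$ is a group morphism. Hence $\varphi_A(\eta\star_A\delta)=\prod_b \varphi_A((b)\star_A\delta)^{m_b}$. Commutativity of $(A,.)$ then gives the two-sided ideal property.

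\textbf{Key step: substitution.} For $\delta=\sum_a n_a.(a)$ we have $(b)\star_A\delta=\sum_a n_a.(ba)$. Therefore
$$
\varphi_A\bigl((b)\star_A\delta\bigr)=\prod_{a\in A}(1-baX)^{n_a}=f(bX), \qquad f=\varphi_A(\delta).
$$
To make this rigorous, I will use the substitution map $\sigma_b: A[[X]]\to A[[X]]$, $h(X)\mapsto h(bX)$. This is a well-defined ring endomorphism, since it multiplies the coefficient of $X^k$ by $b^k$. It maps $\cA(A)$ into itself and restricts to a group endomorphism of $(\cA(A),.)$, so it commutes with inverses. In particular $\sigma_b\bigl((1-aX)^{-1}\bigr)=(1-abX)^{-1}$, which justifies the displayed identity for negative $n_a$ too. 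Consequently, if $f=1$ then $f(bX)=1$, and $(b)\star_A\delta\in\Ker\varphi_A$. Here the zero-divisor issues that obstruct the symmetric-function argument for $S_A$ disappear, because no unique factorization is ever invoked.

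\textbf{Conclusion.} $\Ker\varphi_A$ is an ideal, so $\cD_0(A)/\Ker\varphi_A$ is a commutative unitary ring, with unit the class of $(1)$, i.e.\ the element $1-X$. By surjectivity of $\varphi_A$ (the Divisor map Proposition), $\varphi_A$ induces a group isomorphism $\cD_0(A)/\Ker\varphi_A\approx S_A$, and we transport the ring structure to $S_A$. Unwinding the definitions, for any choices $\delta_f,\delta_g$ with $\varphi_A(\delta_f)=f$ and $\varphi_A(\delta_g)=g$, the product is $f\star g=\varphi_A(\delta_f\star_A\delta_g)$. This is exactly formula (\ref{eq:ene_formula}), and it is independent of the choices because changing $\delta_f$ or $\delta_g$ by an element of the kernel changes $\delta_f\star_A\delta_g$ by an element of the ideal. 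The only point needing care is the substitution step, specifically that $\sigma_b$ respects inverses in $\cA(A)$; I expect no real obstacle there.
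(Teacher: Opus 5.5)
Your proof is correct and follows the paper's argument: reduce to generators $(c)$, then note that $\varphi_A((c)\star_A\delta)$ is the image of $\varphi_A(\delta)$ under the substitution $X\mapsto cX$, a ring endomorphism of $A[[X]]$ that fixes $1$. You are also slightly more accurate than the paper, which calls this substitution a ring isomorphism; it is only an endomorphism unless $c$ is a unit, and an endomorphism is all the argument needs.
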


\begin{proof}
In order to prove that $\Ker \varphi_A$ is an ideal of $\cD_0(A)$, it is enough to prove 
that for $c\in A$ we have $(c)\star \Ker \varphi_A \subset \Ker \varphi_A$. 
The ring isomorphism $\psi_c : A[[X]]\to A[[X]]$ (for the usual ring 
structure of $A[[X]]$) defined by 
$$
\psi_c \left (\sum_{n\geq 0} a_n X^n \right ) = \sum_{n\geq 0} a_n (cX)^n 
$$
induces on the multiplicative group $S_A$ a group morphism $\psi_c : S_A \to S_A$ (we keep the same notation for this restriction). For a divisor $\delta\in \cD_0(A)$, 
$$
\delta = \sum_{a\in A} n_a.(a)
$$
we have
$$
\prod_{a\in A} (1-ca X)^{n_a} = \prod_{a\in A} (1-a (cX))^{n_a}
$$
which means
$$
\varphi_A((c)\star \delta)=\psi_c(\varphi_A(\delta)) \ .
$$
Therefore, if $\delta\in \Ker \varphi_A$ we get $(c)\star \delta \in \Ker \varphi_A$.

Finally, observe that the choice of a split factorization corresponds 
to the choice of $\delta $ modulo $\Ker \varphi_A$.
\end{proof}

\textbf{Remark.} As observed before we always have $\ZZ .(0) \subset \Ker \varphi_A$. When 
$A$ has no divisors of $0$ then we have $\ZZ .(0) = \Ker \varphi_A$, and this gives the isomorphism of e\~ne rings,
$$
 \cD_0(A)/\ZZ .(0) \approx \cD_0(A^*)  \ .
$$

In this case, the map $\varphi_{A^*}: \cD_0(A^*) \to S_A$ is an isomorphism of e\~ne rings, and 
it induces an isomorphism of e\~ne semirings $\varphi^+_{A^*}: \cD_0^+(A^*) \to P_A$.

\medskip

\begin{definition}
We define the twisted divisor of $f\in S_A$, as
$$
\#\Div(f(X)) =\sum_{a\in A} n_a . (a) \in \cD_0(A)/ \Ker \varphi_A \  .
$$ 
\end{definition}

This is just $\Div f(1/X)$ when this makes sense.

\begin{corollary}(E\~ne product) \label{cor:def_ene}
 For $f,g \in S_A$, the e\~ne product $f\star g \in S_A$ is uniquely defined by
$$
\#\Div (f\star g) = \#\Div (f) \star_{A} \#\Div(g)
$$
\end{corollary}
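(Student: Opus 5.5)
The plan is to read the corollary as a restatement of Lenstra's Proposition, written in the language of twisted divisors. There are three things to check. First, $\#\Div$ is a well-defined bijection $S_A \to \cD_0(A)/\Ker \varphi_A$. Second, the right-hand side $\#\Div(f)\star_A \#\Div(g)$ is a well-defined class in that quotient. Third, there is exactly one element of $S_A$ with that class, and it coincides with the product given by formula (\ref{eq:ene_formula}).

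\textbf{Step 1: $\#\Div$ is a bijection.} By the Divisor map Proposition, $\varphi_A:\cD_0(A)\to S_A$ is a surjective group morphism. It therefore induces a group isomorphism $\bar\varphi_A:\cD_0(A)/\Ker\varphi_A \to S_A$. Any two split factorizations of $f$ correspond to divisors $\delta,\delta'$ with $\varphi_A(\delta)=\varphi_A(\delta')=f$, so $\delta-\delta'\in \Ker\varphi_A$. Hence $\#\Div(f)$ is exactly $\bar\varphi_A^{-1}(f)$. In particular $\#\Div$ is well defined and bijective.

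\textbf{Step 2: the right-hand side is well defined.} By Lenstra's Proposition, $\Ker\varphi_A$ is an ideal of the commutative e\~ne ring $(\cD_0(A),.,\star_A)$; the ring structure comes from Proposition \ref{prop:ene_finite_divisors}, since $(A,.)$ is a commutative monoid. So the convolution descends to the quotient ring. Concretely, if $\delta\equiv\delta'$ and $\eta\equiv\eta'$ modulo the kernel, then
$$\delta\star_A\eta-\delta'\star_A\eta'=(\delta-\delta')\star_A\eta+\delta'\star_A(\eta-\eta')\in\Ker\varphi_A ,$$
where the difference is written additively in the $\ZZ$-module $\cD_0(A)$. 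Hence $\#\Div(f)\star_A\#\Div(g)$ is a well-defined class.

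\textbf{Step 3: existence, uniqueness and the explicit formula.} Since $\#\Div$ is bijective, there is exactly one element of $S_A$ whose twisted divisor is that class, namely
$$f\star g:=\bar\varphi_A\bigl(\#\Div(f)\star_A\#\Div(g)\bigr).$$
To see that this agrees with formula (\ref{eq:ene_formula}), pick representatives $\delta_f=\sum n_a(a)$ and $\delta_g=\sum m_b(b)$. Then
$$\delta_f\star_A\delta_g=\sum_{c}\Bigl(\sum_{ab=c}n_am_b\Bigr)(c),$$
and applying $\varphi_A$ gives exactly $\prod_{a,b}(1-abX)^{n_am_b}$. Restricted to $P_A$, this recovers Theorem \ref{thm:well_defined}.

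None of these steps is a real obstacle, because the substantive content, that the kernel is an ideal, is already supplied by Lenstra's Proposition. The only point requiring care is Step 2, the compatibility of $\star_A$ with passing to the quotient. There I would make the ideal property explicit, including its two-sidedness, which is automatic because the ring is commutative.
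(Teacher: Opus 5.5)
Your proposal is correct and follows the paper's route. The paper gives no separate proof: it treats the corollary as an immediate consequence of Lenstra's Proposition ($\Ker\varphi_A$ is an ideal, so $\star_A$ descends to $\cD_0(A)/\Ker\varphi_A\approx S_A$) together with the definition of $\#\Div(f)$ as the class of a split factorization modulo $\Ker\varphi_A$, and your three steps spell out exactly that deduction, including the agreement with the explicit product formula.
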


\subsection{Continuous extension of the e\~ne product to $\cA(A)$.} \label{sect:extension}
We consider an arbitrary unitary commutative ring $A$.
Let $\cA(A)=1+XA[[X]]$ the multiplicative group of formal power series with coefficients in $A$ endowed 
with the Krull topology where a bases of neighborhoods of $0$ is given by $(1+X^nA[[X]])_{n\geq 1}$.
For the next Lemma we  
consider two polynomials given in split form.
\begin{lemma}\label{lem:Fundamental}
Let $P(x),Q(x) \in  P_A \cap A[X]$ be two split polynomials and write
\begin{align*}
P(X) &=1+a_1 X+a_2 X^2+\ldots =\prod_{a\in A} \left (1-aX\right )^{n_a}\\
Q(X) &=1+b_1 X+b_2 X^2+\ldots =\prod_{b\in A} \left (1-bX \right )^{m_b}
\end{align*}
with all but a finite number $(n_a), (m_b), (a_n), (b_n)$ being $0$.
We have that $P\star Q \in P_A$ is a split polynomial and
$$
(P\star Q)(X) = 1+c_1 X+c_2 X^2+\ldots =\prod_{c\in A} (1-cX)^{\left (\sum_{a.b=c} n_a.m_b \right )}
$$
with
$$
c_n=C_n(a_1, a_2,\ldots, a_n, b_1, b_2, \ldots, b_n)
$$
where the $C_n\in \ZZ[X_1, \ldots, X_n, Y_1,\ldots, Y_n]$, for $n\geq 1$,
are universal polynomial with integer coefficients.
\end{lemma}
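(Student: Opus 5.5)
The plan is to reduce everything to the generic situation with independent variables, where the result is the classical fact that symmetric polynomials are polynomials in the elementary symmetric ones. Fix $N,M\geq 0$ and work over the polynomial ring $R=\ZZ[x_1,\ldots,x_N,y_1,\ldots,y_M]$. Put
$$
\tilde P(X)=\prod_{i=1}^N(1-x_iX), \qquad \tilde Q(X)=\prod_{j=1}^M(1-y_jX), \qquad \tilde P\star\tilde Q(X)=\prod_{i,j}(1-x_iy_jX).
$$
The last product is the definition (\ref{eq:ene_formula}) applied to these divisors. Its coefficient of $X^n$ is $(-1)^n e_n(\{x_iy_j\})$. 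This is a polynomial with integer coefficients, and it is invariant under $S_N\times S_M$ acting separately on the $x$'s and on the $y$'s. By the fundamental theorem on symmetric polynomials, applied twice (first in the $x$'s over $\ZZ[y]$, then in the $y$'s), it can be written as a polynomial with integer coefficients in $e_1(x),\ldots,e_N(x)$ and $e_1(y),\ldots,e_M(y)$. The coefficients of $\tilde P$ and $\tilde Q$ are $a_k=(-1)^ke_k(x)$ and $b_k=(-1)^ke_k(y)$. So we get $c_n=C_n^{N,M}(a_1,\ldots,a_N,b_1,\ldots,b_M)$ for some $C_n^{N,M}$ with integer coefficients.

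Next we make the $C_n$ independent of $N$ and $M$, and show that only $a_1,\ldots,a_n,b_1,\ldots,b_n$ enter. For this I would use the standard weight argument. Each monomial in $e_n(\{x_iy_j\})$ has degree $n$ in the $x$'s and degree $n$ in the $y$'s. Hence $C_n^{N,M}$ can be chosen bihomogeneous of weight $n$ in the $a$'s and weight $n$ in the $b$'s, where $a_k,b_k$ have weight $k$, so only $a_k,b_k$ with $k\leq n$ appear. Compatibility in $N$ comes from setting $x_{N+1}=0$. This sends $e_k(x_1,\ldots,x_{N+1})$ to $e_k(x_1,\ldots,x_N)$, with $e_{N+1}\mapsto 0$, and similarly for the $y$'s. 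Take $N,M\geq n$. Then the elementary symmetric polynomials of degree $\leq n$ are algebraically independent, so the representation is unique and the polynomials stabilize. The truncation $e_k=0$ for $k>N$ then recovers the smaller cases. This gives universal $C_n\in\ZZ[X_1,\ldots,X_n,Y_1,\ldots,Y_n]$. The main obstacle I expect is this bookkeeping of uniqueness and stabilization across $N,M$, not the symmetry argument itself.

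Finally, specialize. Given $P,Q$ as in the statement, list the roots with multiplicities as $\alpha_1,\ldots,\alpha_N$ and $\beta_1,\ldots,\beta_M$ in $A$. Use the ring morphism $R\to A$ with $x_i\mapsto\alpha_i$ and $y_j\mapsto\beta_j$. It sends $\tilde P,\tilde Q$ to $P,Q$ and $\prod(1-x_iy_jX)$ to $\prod_{c}(1-cX)^{\sum_{ab=c}n_am_b}$. The latter is visibly a split polynomial with constant term $1$, so it lies in $P_A$. Since polynomial identities over $\ZZ$ are preserved by ring morphisms, $c_n=C_n(a_1,\ldots,a_n,b_1,\ldots,b_n)$. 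The factors with $n_0$ or with $ab=0$ contribute $1$, so it does not matter whether $A$ or $A^*$ is used. This also yields the independence of the split presentation asserted in Theorem \ref{thm:well_defined}.
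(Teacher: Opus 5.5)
Your argument is correct and is essentially the paper's proof: generic variables, the fundamental theorem on symmetric polynomials applied twice to the coefficients of $\prod_{i,j}(1-x_iy_jX)$ (which are symmetric in the $x$'s and in the $y$'s separately), and specialization along a ring morphism to $A$. Your weight and uniqueness bookkeeping (setting $x_{N+1}=0$ and using the algebraic independence of the $e_k$) sharpens the stabilization step, which the paper justifies only by asserting that the $n$-th germ of $P\star Q$ depends on the $n$-th germs of $P$ and $Q$; it also cleanly covers polynomials with more than $n$ roots.
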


\begin{proof} In the ring $\ZZ[(X_k)_{1\leq k\leq n}, (Y_k)_{1\leq k\leq n}, T]$ of polynomials in $2n+1$ variables consider the expansions
$$
\prod_{k=1}^{n} (1-X_k T) = \sum_{k=1}^{n} \Sigma_k(X) T^k \ \ \text{and} 
\ \ \prod_{l=1}^{n} (1-Y_l T) = \sum_{l=1}^{n} \Sigma_l(Y) T^l 
$$
where the $\Sigma_k(X)$, resp. $\Sigma_l(Y)$,  are the elementary symmetric functions on the 
variables $(X_k)_{1\leq k\leq n}$, resp. $(Y_k)_{1\leq k\leq n}$.
Then we have
$$
\prod_{k,l=1}^n (1-X_k Y_l T) = \sum_{m=1}^{n^2} \Sigma_{m,n} (X,Y) T^m
$$
The polynomial $\Sigma_{m,n}(X,Y)\in \ZZ[(X_k)_{1\leq k\leq n},(Y_k)_{1\leq k\leq n}]$ is symmetric individually
on the two groups of variables $(X_k)_{1\leq k\leq n}$ and $(Y_k)_{1\leq k\leq n}$. Applying the Fundamental Theorem
on Symmetric Functions with the base ring $\ZZ[(X_k)_{1\leq k\leq n}]$ (see Bourbaki
Algebra Chapter 4 \cite{Bou}),  we have that
$$
\Sigma_{m,n} (X,Y) \in \ZZ[(X_k)_{1\leq k\leq n}] [(\Sigma_l((Y_k)_{1\leq k\leq n})_{1\leq k
\leq n}]
$$
Applying a second time the same Theorem to each coefficient
in $\ZZ[(X_k)_{1\leq k\leq n}]$ that are symmetric in the variables $(X_k)_{1\leq k\leq n}$, we obtain
that
$$
\Sigma_{m, n}(X,Y) \in \ZZ[(\Sigma_k(X))_{1\leq k\leq n}, (\Sigma_k (Y))_{1\leq k\leq n}]
$$
For $1\leq m\leq n$ the polynomials $\Sigma_{m, n}(X,Y) =\Sigma_{m}(X,Y)$ stabilize and are independent 
of $n\geq m$ because the $m$-th germ of $P\star Q$ depends only on the $m$-th germ of $P$ and $Q$.
Replacing the variables by the values in $A$, we have $c_n=\Sigma_n((a_l)_{1\leq l\leq n},(b_l)_{1\leq l\leq n})$ and the result follows.

\end{proof}

\begin{corollary}[Universality] \label{cor:universality}
For an arbitrary ring $A$ we define the e\~ne ring structure $(\cA(A),.,\star)$ using 
the coefficient formulas given by the universal polynomials $(C_n)_{n\geq 1}$. 
This defines  a commutative topological ring  for the Krull topology.

The e\~ne ring structure $(\cA(A),.,\star)$ is a continuous extension 
of the  e\~ne semi-ring structure on $(P_A,.,\star)$.
\end{corollary}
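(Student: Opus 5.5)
We must prove that $(\cA(A),.,\star)$, with $\star$ given coefficientwise by the universal polynomials $C_n$ of Lemma \ref{lem:Fundamental}, is a commutative unitary ring, that $\star$ is continuous for the Krull topology, and that on $P_A$ it agrees with the e\~ne product of Theorem \ref{thm:well_defined}.

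\textbf{Definition, agreement and continuity.} For $f=1+\sum a_nX^n$ and $g=1+\sum b_nX^n$ in $\cA(A)$, set $f\star g=1+\sum_{n\geq 1} C_n(a_1,\ldots,a_n,b_1,\ldots,b_n)X^n$. This is well defined because the $C_n$ stabilize, so that $C_n$ for $n\leq m$ does not depend on the truncation level $m$ used in Lemma \ref{lem:Fundamental}. Agreement with the e\~ne product on $P_A$ is exactly the content of Lemma \ref{lem:Fundamental}. Continuity follows because the $n$-th coefficient of $f\star g$ depends only on the first $n$ coefficients of $f$ and $g$. Hence if $f\equiv f'$ and $g\equiv g' \bmod X^{n+1}$, then $f\star g\equiv f'\star g' \bmod X^{n+1}$, so $\star$ is uniformly continuous. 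Multiplication is continuous as well, and the inverse is continuous on $1+XA[[X]]$.

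\textbf{Ring axioms by the universality principle.} Each axiom is, coefficient by coefficient, an identity between integer polynomials in finitely many coefficient variables:
\begin{itemize}
\item associativity and commutativity of $\star$;
\item distributivity $(fg)\star h=(f\star h)(g\star h)$;
\item the unit $(1-X)$ being neutral for $\star$.
\end{itemize}
The multiplication $fg$ also has integer polynomial coefficients, and so does the inverse.

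It therefore suffices to verify these identities in the universal case. Take $A_0=\ZZ[x_1,\ldots,x_N,y_1,\ldots,y_N,z_1,\ldots,z_N]$ and the split polynomials $f=\prod(1-x_iX)$, $g=\prod(1-y_jX)$, $h=\prod(1-z_kX)$. Modulo $X^{n+1}$ with $N\geq n$, their coefficients $\Sigma_k(x)$, $k\leq n$, are algebraically independent over $\ZZ$, so an integer polynomial identity in $\Sigma_1,\ldots,\Sigma_n$ holds identically as soon as it holds on these elements. For split polynomials the identities follow from Proposition \ref{prop:ene_finite_divisors_partial}, or more simply from the divisor description (\ref{eq:formula1}) in the domain $A_0$. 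There, associativity and commutativity of $\star$ are associativity and commutativity of the product in $A_0$ acting on roots, via $\prod(1-x_iy_jz_kX)$. Distributivity is the additivity of convolution in the divisor, since $fg$ corresponds to the sum of divisors. The unit $(1-X)=\varphi((1))$ gives $\prod(1-1\cdot y_jX)=g$. Specializing $A_0\to A$ then yields the identities for all polynomial truncations in any $A$, hence for all power series, since each coefficient involves only finitely many variables.

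\textbf{Main obstacle.} The only delicate point is distributivity when one factor must be inverted, for example showing $f^{-1}\star g=(f\star g)^{-1}$, which is needed for $(\cA(A),.)$ to be the additive group of the ring. I would deduce it from $(f^{-1}f)\star g=1\star g$ together with $1\star g=1$. The latter holds because every $C_n$ vanishes when all $a_i=0$: the empty divisor convolves to the empty divisor. Then $(f^{-1}\star g)(f\star g)=1$. This requires distributivity for arbitrary elements of $\cA(A)$, not just split ones. That in turn follows from the universal identity applied to generic truncations, since the identity $C(\text{coeffs of }fg,\ldots)=\ldots$ is a polynomial identity in the independent coefficients $a_i,a'_i,b_i$, and these can all be realized simultaneously by generic split polynomials.
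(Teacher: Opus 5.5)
Your proposal is correct and follows essentially the paper's route: define $\star$ coefficientwise through the integer polynomials $C_n$, get continuity from the fact that the $n$-th coefficient involves only the first $n$ coefficients, and transfer the ring axioms as universal integer polynomial identities — and you go further than the paper by justifying why these identities hold (generic split polynomials over $\ZZ[x,y,z]$, whose elementary symmetric functions are algebraically independent), a step the paper leaves implicit. Your ``main obstacle'' is actually automatic, since $(\cA(A),.)$ is already an abelian group and distributivity on all of $\cA(A)$ gives $1\star g=(1\star g)^2$, hence $1\star g=1$ and $f^{-1}\star g=(f\star g)^{-1}$, but your derivation of it is also fine.
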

\begin{proof}
Note that it makes sense to define coefficientwise the e\~ne product $f\star g$ for any $f,g\in  \cA(A)$ because
the polynomials $(C_n)_{n\geq 1}$ have integer coefficients. 
This binary operation
is continuous for the Krull topology since
if $f_1(X)-f_2(X)\in X^nA[[X]]$ and $g_1(X)-g_2(X)  \in X^nA[[X]]$, then from the
universal formulas we have
$(f_1\star g_1)(X)-(f_2\star g_2)(X)  \in X^nA[[X]]$.
The extension of the e\~ne product defined in this way extends the commutative
semi-ring structure $(P_A,.,\star)$ to a commutative ring structure since the properties as  associativity or
commutativity are encoded in universal polynomial relations on the coefficients
that continue to hold, and also the coefficients of the inverse of an element $f$ in $\cA$ are universal polynomials
with integer coefficients on the coefficients of $f$. 
\end{proof}

\begin{corollary}
When  $P_A$ is dense in $\cA(A)$ the extension of the e\~ne product to $\cA(A)$
is unique.
\end{corollary}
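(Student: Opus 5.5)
The plan is to compare any two continuous extensions on the dense set $P_A\times P_A$ and then pass to limits using the Krull topology. Concretely, let $\star'$ be any binary operation on $\cA(A)$ that is continuous for the Krull topology (jointly in both variables) and agrees with the e\~ne product of Theorem \ref{thm:well_defined} on $P_A\times P_A$. Corollary \ref{cor:universality} already gives one such extension, $\star$, so the task is to show $\star'=\star$.

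First I will record the topological facts needed. The Krull topology on $\cA(A)$ is Hausdorff: if $f\neq g$, they differ in some coefficient of $X^n$, so $g\notin f(1+X^{n+1}A[[X]])$. It is also first countable, with the neighborhoods $f(1+X^nA[[X]])$ forming a countable basis at $f$, so sequences suffice. Density of $P_A$ then means that for every $f\in\cA(A)$ there is a sequence $f_n\in P_A$ with $f_n\equiv f \bmod X^{n+1}$. Since $\cA(A)$ is a topological group, the translates of $1+X^nA[[X]]$ and the congruence condition describe the same neighborhoods.

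Next, given $f,g\in\cA(A)$, I choose such sequences $f_n\to f$ and $g_n\to g$ in $P_A$. Then $(f_n,g_n)\to(f,g)$ in the product topology. Joint continuity of both operations gives $f_n\star' g_n\to f\star' g$ and $f_n\star g_n\to f\star g$. The two sequences are termwise equal, because both operations restrict to the e\~ne product on $P_A$. Uniqueness of limits in a Hausdorff space then forces $f\star' g=f\star g$.

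The one point needing care is the precise sense of ``continuous extension''. If only separate continuity were assumed, I would approximate one variable at a time: first fix $g\in P_A$ and vary $f$, then fix an arbitrary $f$ and vary $g$. Each step uses only continuity in one variable together with density, so the same argument still yields uniqueness. No algebraic input is needed, since the whole proof is topological.
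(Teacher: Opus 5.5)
Your proof is correct and is the standard density argument the paper leaves implicit. The paper states the corollary without proof, relying on Corollary \ref{cor:universality}: the extension is (jointly, even uniformly) continuous for the Hausdorff Krull topology. Hence any two continuous extensions that agree on the dense set $P_A\times P_A$ coincide, and you are right to read ``extension'' as ``continuous extension''.
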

For example, this happens when $A$ is a algebraic closed field or when $A$ is a
ring where every $a\in A$ has a $n$-th root and the polynomial $1-X^n$ splits
for every $n\geq 2$. In general this is not true. We prove in section \ref{counterexample} 
that neither $P_\RR$ is dense in $S_\RR$, nor $S_\RR$ is dense in $\cA(\RR)$.

The next Corollary follows from the universality of the formulas but not from 
the density.

\begin{corollary}
The e\~ne product defined in $\cA(A)$ in Corollary \ref{cor:universality} coincides with the e\~ne product 
in $S_A$ defined by equation (\ref{eq:ene_formula}).
\end{corollary}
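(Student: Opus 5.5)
We show that for $f,g\in S_A$ with split presentations $f=\prod_a (1-aX)^{n_a}$, $g=\prod_b(1-bX)^{m_b}$, $n_a,m_b\in\ZZ$, the universal coefficientwise product of Corollary \ref{cor:universality} equals $\prod_{a,b}(1-abX)^{n_am_b}$. The approach uses only that $(\cA(A),.,\star)$ is a ring, in particular that $\star$ distributes over the multiplicative group law, together with agreement on $P_A$.

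\textbf{Step 1: agreement on $P_A$.} For $f,g\in P_A$, Lemma \ref{lem:Fundamental} says that the coefficients of the split product $\prod(1-abX)^{n_am_b}$ are $C_n(a_1,\dots,a_n,b_1,\dots,b_n)$. By definition, these are also the coefficients of the universal product. So the two products agree on $P_A\times P_A$. The Lemma is stated for polynomials, which are exactly the elements of $P_A$, so this needs no density argument.

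\textbf{Step 2: extension to $S_A$ by bilinearity.} Write $f=f_+f_-^{-1}$ and $g=g_+g_-^{-1}$, where $f_\pm,g_\pm\in P_A$ collect the factors with positive and negative exponents respectively. In the ring $(\cA(A),.,\star)$, the map $h\mapsto h\star k$ is a group endomorphism of $(\cA(A),.)$. Hence
\[
f\star g=(f_+\star g_+)(f_-\star g_-)(f_+\star g_-)^{-1}(f_-\star g_+)^{-1}.
\]
By Step 1, each of the four factors is the split product of the corresponding positive divisors. Multiplying them out gives $\prod_{a,b}(1-abX)^{n_am_b}$ with the signed exponents, which is formula (\ref{eq:ene_formula}). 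As a byproduct, this also shows directly that (\ref{eq:ene_formula}) does not depend on the chosen presentation, consistent with Lenstra's Proposition.

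\textbf{Main obstacle.} The delicate point is the justification in Corollary \ref{cor:universality} that distributivity holds in $\cA(A)$ for arbitrary $A$. We cannot rely on density, since $P_A$ need not be dense. The argument is that distributivity of $\star$ over $.$ amounts to polynomial identities in the coefficients with integer coefficients. Each identity holds for split polynomials over the polynomial ring $\ZZ[x_1,\dots,x_N,y_1,\dots,y_N,z_1,\dots,z_N]$, because there the split formula is visibly distributive at the level of divisors (Proposition \ref{prop:ene_finite_divisors}). The elementary symmetric functions of the variables are algebraically independent, so the identities hold as identities in $\ZZ[\Sigma(x),\Sigma(y),\Sigma(z)]$. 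They therefore hold for all coefficient values in any $A$. With distributivity secured in this way, Steps 1 and 2 complete the argument.
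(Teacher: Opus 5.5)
Your proposal is correct and follows essentially the paper's route: agreement on $P_A$ by construction, then the ring structure of $(\cA(A),.,\star)$ to handle negative exponents. The paper does this bookkeeping through the sign rules $f^{-1}\star g^{-1}=f\star g$ and $f^{-1}\star g=(f\star g)^{-1}$ checked on the generators $(1-aX)^{\pm1}$; your splitting $f=f_+f_-^{-1}$ does the same thing directly, gives presentation-independence of (\ref{eq:ene_formula}) without Lenstra's proposition, and your algebraic-independence argument correctly supplies the distributivity that Corollary \ref{cor:universality} only asserts.
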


\begin{proof}
By construction it coincides in $P_A$. When $A=\CC$, simply looking at divisors,  we have
\begin{align*}
f^{-1}\star g^{-1} &=f\star g \ , \\
f^{-1}\star g &=(f\star g)^{-1} \ .
\end{align*}
By universality, these equations remain true for any ring $A$. More precisely,
the polynomial equations that give these identities coefficientwise are the same 
as those for $A=\CC$. When we replace in this formulas $f$ and $g$ by $1+aX$ and $1+bX$, 
we can see that the extension of the e\~ne product in $P_A$ to $\cA(A)$ coincides on the 
generators of $S_A$ with the one defined by equation (\ref{eq:ene_formula}) (or Corollary \ref{cor:def_ene}), hence it coincides
in all $S_A$.
\end{proof}

\subsection{The Big Witt ring.}

We consider the multiplicative group of formal power series
$\cA(A) = 1+XA[[X]]$ and the Big Witt ring structure $(\cA(A),.,\star_w)$
defined by G. Bergman (\cite{Ber}, 1966), following Witt work on p-adic rings
(\cite{Witt}, 1937). A very direct
construction was given by P. Cartier
(\cite{Ca}, 1967), and another very enlighting construction by H. Lenstra (\cite{Len}, 2002). We endow $\cA(A)$ with the Krull topology generated
by the ideal $(X)$ and the Big Witt ring is a topologcal ring.

\begin{theorem}
The twisted e\~ne ring structure $(\cA(A),.,\check \star)$ defined by
$$
f\check\star g =(f\star g)^{-1}
$$
coincides with the Big Witt ring.
\end{theorem}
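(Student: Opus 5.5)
To prove the theorem I would use the standard characterization of the Big Witt ring on $\cA(A)=1+XA[[X]]$. Its addition is the multiplication of power series. Its multiplication $\star_w$ is the unique operation that satisfies three conditions. First, it is given coefficientwise by universal polynomials with integer coefficients in the coefficients of the factors; this is functoriality in $A$ together with continuity for the Krull topology. Second, it is distributive over the product of power series. Third, on the generators it satisfies
$$
(1-aX)^{-1}\star_w(1-bX)^{-1}=(1-abX)^{-1}.
$$
Equivalently, $\star_w$ is characterized by the ghost map $f\mapsto -X\frac{d}{dX}\log f$, which sends $(1-aX)^{-1}$ to $\sum_{n\ge1}a^nX^n$ and becomes a ring morphism onto $XA[[X]]$ with coefficientwise multiplication when $A$ is torsion free. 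The plan is to show that $\check\star$ satisfies exactly the same properties and then invoke uniqueness.

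The first step is to check that $(\cA(A),.,\check\star)$ is a commutative ring, with the same addition and unit $(1-X)^{-1}$. Commutativity and associativity follow from Corollary \ref{cor:universality}: using $f^{-1}\star g=(f\star g)^{-1}$ (the identity proved in the last corollary) we get $(f\check\star g)\check\star h=(f\star g\star h)$ up to the sign conventions, with two inversions cancelling. Distributivity holds because $\star$ is distributive over the product and inversion is a group morphism of the abelian group $\cA(A)$. The unit of $\star$ is $1-X=\varphi_A((1))$, and $f\check\star(1-X)^{-1}=(f\star(1-X))=f$ by the sign rule, so $(1-X)^{-1}$ is the unit of $\check\star$.

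The second step evaluates $\check\star$ on generators:
$$
(1-aX)^{-1}\check\star(1-bX)^{-1}=\bigl((1-aX)^{-1}\star(1-bX)^{-1}\bigr)^{-1}=\bigl((1-aX)\star(1-bX)\bigr)^{-1}=(1-abX)^{-1},
$$
which is exactly the Witt rule. In ghost components, when $A=\CC$, the relations $\#\Div(f\star g)=\#\Div f\star_A\#\Div g$ give, for $f=\prod(1-aX)^{n_a}$ and $g=\prod(1-bX)^{m_b}$, the power sums $p_k(f\star g)=p_k(f)p_k(g)$. The extra sign in the ghost map, $-X\frac{d}{dX}\log f=\sum_k p_k(-\#\Div f)X^k$, is what forces the inversion twist: the ghost map of $f\check\star g$ equals the coefficientwise product of the ghost maps of $f$ and $g$. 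So on $S_\CC$, and by continuity and density on $\cA(\CC)$, $\check\star$ agrees with $\star_w$. This density holds because $\CC$ is algebraically closed, as noted after Corollary \ref{cor:universality}.

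The last step is transfer by universality. Both $\check\star$ and $\star_w$ are given coefficientwise by universal integer polynomials: for $\check\star$ because $\star$ and inversion are, and for $\star_w$ by its classical construction. Two integer polynomials that agree on all complex values are equal, so the two products coincide over every ring $A$.

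I expect the main obstacle to be fixing which classical definition of the Big Witt ring to compare against. Conventions differ: $1+XA[[X]]$ versus $1-XA[[X]]$, and the sign in the ghost map. Care is needed so that the inversion twist matches exactly and does not produce $(1-aX)\star_w(1-bX)=(1-abX)$ instead. I would settle this by normalizing with Bergman's or Lenstra's convention, in which $(1-aX)^{-1}$ is the Teichmüller-like element, and verifying the generator identity above directly.
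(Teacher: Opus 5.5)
Your proof is correct, and its engine is the paper's. The e\~ne product multiplies power sums, and inversion flips the sign of the logarithmic derivative, so $\check\star$ multiplies Witt ghost components. Injectivity of the ghost map in characteristic $0$, together with integrality of the formulas, then gives $\check\star=\star_w$ for every $A$.

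The difference is how you reach the universal identity. The paper works with generic split elements $\prod_k(1-X_kT)$ and $\prod_l(1-Y_lT)$ in $\QQ[[(X_k),(Y_l)]]$. Their exponential form produces the Newton sums and the Bergman--Witt polynomials directly, so the identification uses no density. You work with finite divisors over $\CC$ and pass to $\cA(\CC)$ by continuity and density; density is trivial there, since every truncation of $f\in\cA(\CC)$ splits. You then descend to all $A$ because integer polynomials agreeing on $\CC$ are equal. This is more elementary.

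Your first two steps also contain a second proof. $\check\star$ and $\star_w$ are both biadditive and agree on the generators $(1-aX)^{-1}$ of $S_A$, so they agree on $S_A$. The same density-plus-universality step then finishes, with no power-sum computation for $\star$ at all. The paper's route, in exchange, explains where the ghost components come from and gives the Hadamard-product formulas as a by-product.

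There is one sign correction, exactly where you anticipated trouble. The map sending $(1-aX)^{-1}$ to $\sum_{n\ge1}a^nX^n$ is $f\mapsto X\frac{d}{dX}\log f=\sum_k p_k(-\#\Div f)X^k$, not $-X\frac{d}{dX}\log f$. The latter is the paper's $\cD f=\sum_k p_k(\#\Div f)X^k$, which is multiplicative for $\star$ rather than for $\star_w$. Read literally, your ghost-map characterization would therefore give $\star_w=\star$. Your actual computation only uses the formula $\sum_k p_k(-\#\Div f)X^k$, so it is right once the operator is written as $X\frac{d}{dX}\log f$.
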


We prove this Main Theorem in the next section.


\subsection{Identification with the twisted Big Witt ring.}\label{sect:identification}

We work in the ring of formal power series in a countable set of variables with
rational coefficients $\QQ[[(X_k)_{k\geq 1}, (Y_k)_{k\geq 1}, \ldots ]]$. For background,
we refer the reader to Bourbaki, Alg\`ebre, Chapter IV, section 4, \cite{Bou}.
This ring has characterictic $0$.

\begin{lemma}[Formal Exponential form]
\label{lemmaexp}
We have
{\small
$$
f(T)= \prod_{k=1}^{+\infty} (1-X_k T) =\exp\left (- \sum_{n=1}^{+\infty} \frac1n N_n (X) T^n \right ) \ \ \text{with} \ \  N_n(f)=N_n (X)=\sum_{k=1}^{+\infty} X_k^n
$$
}
and
{\small
$$
f(T)=\prod_{k=1}^{+\infty} (1-X_k T^k)^{-1} =\exp\left (\sum_{n=1}^{+\infty} \frac1n W_n(X) T^n \right ) \ \ \text{with} \ \  W_n(f)=W_n(X) = \sum_{d|n} d X_d^{n/d} \ .
$$
}
\end{lemma}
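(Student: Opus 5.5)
The plan is to take logarithms factor by factor in the complete ring $\QQ[[(X_k)_{k\geq 1},T]]$. We work with power series with zero constant term, so $\log(1-u)=-\sum_{m\geq 1} u^m/m$ and $\exp$ are mutually inverse bijections between $u\,$-series in the ideal generated by $T$ and the multiplicative group $1+T\QQ[[(X_k),T]]$. Moreover, $\log$ converts products into sums. Infinite products and sums are understood in the $T$-adic (Krull) topology. The product $\prod_k (1-X_kT)$ converges because each factor is $\equiv 1$ modulo $T$, and only finitely many monomials contribute to each coefficient once we also filter by total degree in the $X$'s. Since $\log$ and $\exp$ are continuous for this topology, we may compute the logarithm of the infinite product as the infinite sum of the logarithms of the factors.

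For the first identity, we write
$$
\log\prod_{k\geq 1}(1-X_kT)=\sum_{k\geq 1}\log(1-X_kT)=-\sum_{k\geq 1}\sum_{n\geq 1}\frac{X_k^nT^n}{n}.
$$
We then interchange the two summations. This is legitimate because, for fixed $n$, the coefficient of $T^n$ is the convergent series $\frac1n\sum_k X_k^n=\frac1n N_n(X)$ in the power series ring. This gives $-\sum_n \frac1n N_n(X)T^n$, and applying $\exp$ yields the formula.

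For the second identity, we write
$$
-\log(1-X_kT^k)=\sum_{m\geq 1}\frac{X_k^mT^{km}}{m}.
$$
Summing over $k$, we group the terms by $n=km$. The coefficient of $T^n$ is then
$$
\sum_{d\mid n}\frac{X_d^{n/d}}{n/d}=\frac1n\sum_{d\mid n} d\,X_d^{n/d}=\frac1n W_n(X),
$$
where the substitution $k=d$, $m=n/d$ gives exactly the stated expression for $W_n$. This regrouping is only a finite rearrangement within each fixed power $T^n$, since only divisors $d$ of $n$ contribute. Exponentiating gives the claim.

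The only real point of care is justifying the manipulations with infinitely many variables: convergence of the infinite products, and the interchange $\log\prod=\sum\log$. I would handle this by truncating to the first $K$ variables, where everything is a finite identity. I would then observe that the coefficient of $T^n$ stabilizes as $K\to\infty$, modulo monomials of high $X$-degree, and pass to the limit. The combinatorial regrouping itself is routine.
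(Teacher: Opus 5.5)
Your proposal is correct and is essentially the paper's proof: expand $\log(1-X_kT)$ and $-\log(1-X_kT^k)$, sum over $k$, and regroup by the exponent of $T$, with $n=kl$ giving $\frac1n\sum_{d\mid n} d\,X_d^{n/d}$. One small correction to your convergence remarks: $\prod_k(1-X_kT)$ converges neither $T$-adically nor for the $X$-degree filtration, since its $T$-coefficient $-\sum_k X_k$ is an infinite sum of degree-one monomials; it converges monomial by monomial, i.e.\ modulo the ideal generated by $X_{K+1},X_{K+2},\ldots$, and that is exactly what your truncation argument actually delivers.
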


\begin{proof}
Develop $\log (1-X_kT)$ or $-\log (1-X_kT^k)$ in the exponential and regroup terms with the same exponent of $T$. The first case is clear. For the second case we have
\[
-\sum_{k\geq 1} \log (1-X_k T^k)= \sum _{k,l\geq 1}   \frac1l X_k^l \, T^{kl}
=\sum _{n\geq 1} \frac1n\left(\sum_{k,l\geq 1; kl=n}   k X_k^l\right) T^n = \sum_{n\geq 1} \frac1n W_n(X) T^n .
\]
\end{proof}

The first left formula in the Lemma is the exponential form of the e\~ne product that appears in \cite{PM} Section 4. The second left formula is the exponential
form for the Big Witt product that appears in \cite{Witt1} formula (1.3).

The power series  $N_n(X) \in \ZZ[[(X_k)_{k\geq 1}]]$ are the classical Newton sums of the variables $(X)$. These are power series in an infinite number of variables.
The polynomials $W_n(X) = \sum_{d|n} d X_d^{n/d} \in \ZZ[X_1, \ldots , X_n]$, for all $n\geq 1$,
also called ``ghost components'', appear first in print independently in Bergman \cite{Ber} p.180 and in Lang's Algebra book as an exercise \cite{La} (Exercise in Chapter VIII). Lang gives credit to an oral communication by Witt, and Witt left a manuscript
note from a seminar he gave in Hamburg in June 1964 (see \cite{Witt2} p.164, and  also
the essay by Harder \cite{Har}). It seems historically justified to name
them the Bergman-Witt polynomials. Traditionally the pre-1940 theory of Witt rings
is derived from the subsequence $W_{p^n}$ (see \cite{Ber}, \cite{Bou},  \cite{Ca}, \cite{La}, \cite{Se}). The Big Witt multiplication is defined by
$$
W_n(f\star_w g) = W_n(f).W_n(g) \ .
$$
So the Witt multiplication corresponds to the simple multiplication of ghost components.
They are introduced in the construction
of the Big Witt ring without proper motivation. A  notable exception is Lenstra's elegant
construction \cite{Len} that does not use them.
In our construction, the Bergman-Witt polynomials appear naturally
through the exponential form of the e\~ne product, as shown in the
 above Lemma \ref{lemmaexp}, and the next Lemma gives the exponential form of the
 e\~ne product.

\begin{lemma}[Formal exponential form of the e\~ne product]
{\small
$$
\left (\prod_{k=1}^{+\infty} (1-X_k T)\right ) \star \left (\prod_{l=1}^{+\infty} (1-Y_l T)\right )
=\exp \left (- \sum_{n=1}^{+\infty} \frac1n (N_n(X). N_n(Y)) T^n \right) \ .
$$
}
\end{lemma}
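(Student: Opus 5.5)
The plan is to reduce to finitely many variables, where the e\~ne product is given literally by formula (\ref{eq:ene_formula}). Then I apply the first formula of Lemma \ref{lemmaexp} to the product over the doubly indexed family $(X_kY_l)$, and pass to the limit using Krull continuity and the universality of the coefficient formulas (Lemma \ref{lem:Fundamental} and Corollary \ref{cor:universality}).

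\textbf{Step 1 (finite case).} Fix $n$ and set $f_n(T)=\prod_{k=1}^n(1-X_kT)$ and $g_n(T)=\prod_{l=1}^n(1-Y_lT)$. These are split polynomials over the ring $A=\QQ[[(X_k),(Y_l)]]$. By Theorem \ref{thm:well_defined} (or formula (\ref{eq:ene_formula})),
$$
f_n\star g_n=\prod_{k,l=1}^n(1-X_kY_lT).
$$
Applying the first formula of Lemma \ref{lemmaexp} to the finite family $(X_kY_l)_{k,l\le n}$ gives
$$
f_n\star g_n=\exp\Bigl(-\sum_{m\ge1}\frac1m\Bigl(\sum_{k,l\le n}X_k^mY_l^m\Bigr)T^m\Bigr).
$$
Since $\sum_{k,l\le n}X_k^mY_l^m=N_m^{(n)}(X)\,N_m^{(n)}(Y)$, where $N_m^{(n)}$ denotes the truncated Newton sums, the finite analogue of the identity follows. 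Only the factorization of a double power sum is used here.

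\textbf{Step 2 (passage to infinitely many variables).} This is the main obstacle, because $\prod_{k\ge1}(1-X_kT)$ is not itself a split polynomial. The left side must be read as defined by the universal polynomials $C_m$ of Corollary \ref{cor:universality}, applied to its coefficients. Those coefficients are the full elementary symmetric series $e_j(X)\in\ZZ[[X]]$.

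I would argue coefficientwise in $T$, using the $(X,Y)$-adic topology of the coefficient ring. The coefficient of $T^m$ in $f\star g$ is $C_m(e_1(X),\dots,e_m(X),e_1(Y),\dots,e_m(Y))$. Modulo terms of total degree greater than $D$ in the variables, each $e_j(X)$ agrees with the truncation $e_j^{(n)}(X)$ once $n>D$. So $C_m$ applied to the full series agrees modulo degree $>D$ with the $T^m$ coefficient of $f_n\star g_n$.

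Similarly, the $T^m$ coefficient of $\exp\bigl(-\sum_m \frac1m N_m^{(n)}(X)N_m^{(n)}(Y)T^m\bigr)$ is a polynomial with rational coefficients in finitely many power sums. These power sums agree with the full $N_m(X)N_m(Y)$ modulo degree $>D$ for $n>D$.

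Letting $D\to\infty$ then gives equality of each coefficient of $T^m$ in $\QQ[[(X_k),(Y_l)]]$. The point to check carefully is that the polynomials $C_m$ (and the coefficients of the exponential) are fixed polynomials. Hence evaluation commutes with the inverse limit over truncations, since polynomial maps are continuous for the adic topology.

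In fact a cleaner alternative is to observe that both sides' $T^m$ coefficients are given by polynomials in the $e_j$. Newton's identities express $N_j$ through $e_1,\dots,e_j$ over $\QQ$. Their equality as polynomials in the $e_j$ holds in the finite case for all $n\ge m$, where the $e_j^{(n)}$, $j\le m$, are algebraically independent. The identity therefore holds as a polynomial identity, and so for the infinite family as well.
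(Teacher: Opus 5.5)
Your Step 1 is the paper's argument. The paper's entire proof is the identity $\bigl(\sum_k X_k^n\bigr)\bigl(\sum_l Y_l^n\bigr)=\sum_{k,l}(X_kY_l)^n$, applied to $\prod_{k,l}(1-X_kY_lT)$. It simply takes for granted, with infinitely many variables, that the e\~ne product of the two infinite products equals $\prod_{k,l}(1-X_kY_lT)$. Your Step 2 supplies this passage, which the paper leaves implicit, and the proposal as a whole is correct because your second argument works.

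The first version of Step 2 is wrong as written. The difference $e_j(X)-e_j^{(n)}(X)$ is homogeneous of degree $j$; for instance $e_1(X)-e_1^{(n)}(X)=\sum_{k>n}X_k$. So it never lies in degrees $>D$, and the truncations do not converge in any degree-adic sense.

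The idea of an inverse limit over truncations is right, but the correct mechanism is the ring homomorphism $\pi_n$ of $\QQ[[(X_k),(Y_l)]]$ that sets $X_k=Y_l=0$ for $k,l>n$.
\begin{itemize}
\item It sends $e_j(X)$ to $e_j^{(n)}(X)$ and $N_m(X)$ to $N_m^{(n)}(X)$.
\item It commutes with the fixed polynomials $C_m$ and with the rational polynomials giving the coefficients of the exponential.
\item An element of this ring is determined by its images under all $\pi_n$, since each monomial involves only finitely many variables. Equivalently, you can work with the coefficientwise topology.
\end{itemize}

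Your alternative argument, via the algebraic independence of $e_1^{(n)},\dots,e_m^{(n)}$ (in each set of variables) for $n\ge m$, is correct and suffices by itself. You should add one remark: the Newton expressions of $N_j$ in terms of $e_1,\dots,e_j$ remain valid for infinitely many variables. This is exactly the first formula of Lemma \ref{lemmaexp}, and it is what lets you substitute the full series $e_j(X)$ and $e_j(Y)$ into the polynomial identity.
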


Hence, the e\~ne corresponds to the simple multiplication of the Newton sums.

\begin{proof}
We have
{\small
$$
\left (\sum_{k=1}^{+\infty} X_k^n \right ).\left (\sum_{l=1}^{+\infty} Y_l^n \right )
= \left (\sum_{k, l=1}^{+\infty} (X_kY_l)^n \right ) \ .
$$
}
\end{proof}

\begin{corollary}[Relation with the Hadamard product]
If
$$
\cD f(T) =-T \, \frac{f'(T)}{f(T)}
$$
and if we denote by $\odot$ the Hadamard product, then we have
 \begin{align*}
\cD(f\star g) &= \cD f(T)\odot \cD g(T)   \\
\cD(f\star_w g) &= -\cD f(T)\odot \cD g(T)   \\
 \end{align*}
\end{corollary}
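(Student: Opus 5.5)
The plan is to compute the logarithmic derivative operator $\cD$ on the exponential forms from Lemma \ref{lemmaexp} and the previous Lemma, working first in the universal situation over $\QQ[[(X_k),(Y_l)]]$ and then descending to an arbitrary ring $A$ by universality. Writing $f=\exp(F)$ with $F(T)=\sum_{n\geq 1} \frac{\alpha_n}{n}T^n$, one has $f'/f=F'$, hence
$$
\cD f(T) = -T F'(T) = -\sum_{n\geq 1} \alpha_n T^n .
$$
Thus $\cD$ converts the exponential form into a plain generating series of the ``ghost'' coefficients $\alpha_n$. Recall also that the Hadamard product multiplies coefficientwise: $(\sum u_nT^n)\odot(\sum v_nT^n)=\sum u_nv_nT^n$.

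\textbf{E\~ne product.} For $f=\prod_k(1-X_kT)$, the first formula of Lemma \ref{lemmaexp} gives $\alpha_n=-N_n(X)$, so $\cD f=\sum_n N_n(X)T^n$, and similarly $\cD g=\sum_n N_n(Y)T^n$. By the previous Lemma, $f\star g$ has $\alpha_n=-N_n(X)N_n(Y)$, hence
$$
\cD(f\star g)=\sum_n N_n(X)N_n(Y)T^n=\cD f\odot \cD g .
$$
\textbf{Witt product.} Write $f=\prod_k(1-X_kT^k)^{-1}$, so that $\alpha_n=W_n(f)$ and $\cD f=-\sum_n W_n(f)T^n$, and likewise for $g$. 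The definition $W_n(f\star_w g)=W_n(f)W_n(g)$ then gives
$$
\cD(f\star_w g)=-\sum_n W_n(f)W_n(g)T^n=-\cD f\odot\cD g ,
$$
since the two minus signs in $\cD f\odot \cD g$ cancel. This sign is exactly the one recorded in the statement.

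\textbf{Descent to an arbitrary ring $A$.} This step needs the most care. Both identities are coefficientwise polynomial identities with integer coefficients in the coefficients of $f$ and $g$. Indeed, $\cD f$ has coefficients that are $\ZZ$-polynomials in the coefficients of $f$, because $f$ is invertible with constant term $1$. The products $\star$ (by Corollary \ref{cor:universality}) and $\star_w$ are also given by universal integer polynomials. Every $f\in\cA(A)$ admits a unique representation $\prod_k(1-x_kT^k)^{-1}$, and the Newton/elementary symmetric coordinates in the universal ring are algebraically independent, with each coefficient depending only on finitely many variables. Therefore an identity that holds in the universal $\QQ$-algebra, which is torsion-free, holds over $\ZZ$ and hence specializes to any $A$. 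The point to check is that truncating to finitely many variables is harmless, since the $n$-th coefficient involves only the first $n$ data. This is the same stabilization argument already used in Lemma \ref{lem:Fundamental}.
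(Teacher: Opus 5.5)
Your proposal is correct and is essentially the paper's argument. The paper gives no separate proof and leaves implicit that one reads off $\cD$ from the exponential forms in Lemma \ref{lemmaexp} and the following lemma, so that the Newton sums and ghost components become plain coefficients, and then passes to an arbitrary ring via the integer universal polynomials, as in its subsequent universality corollary. Your descent paragraph is more careful than the paper's, because you state the algebraic independence of the universal coefficients that justifies pulling each identity back to the integer polynomial ring before specializing.
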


The first identity appears in \cite{PM}, Theorem 10.5, the second
one in \cite{Len}, page 1241.

\begin{corollary}
We have $\check \star =\star_w$
\end{corollary}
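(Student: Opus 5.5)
We need to show $(f\star g)^{-1}=f\star_w g$ for all $f,g\in\cA(A)$. Both sides are given coefficientwise by universal polynomials with integer coefficients in the coefficients of $f$ and $g$. For the right side this is the definition of the Big Witt product via ghost components. For the left side it follows from Corollary \ref{cor:universality} and the fact that inversion in $\cA(A)$ is given by universal integral polynomials. So it suffices to prove the identity in a universal situation. We may take $A$ to be a ring of characteristic $0$ containing $\QQ$ in which generic series factor, and then specialize.

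The computation is as follows. Work in $\QQ[[(X_k),(Y_l)]]$ and write the generic elements as
$$
f(T)=\prod_{k\geq 1}(1-X_kT),\qquad g(T)=\prod_{l\geq 1}(1-Y_lT).
$$
Their coefficients are the elementary symmetric functions $(-1)^k\Sigma_k(X)$, which are algebraically independent. Hence every universal polynomial identity in the coefficients of $f$ and $g$ can be tested on this pair, by the same truncation argument as in Lemma \ref{lem:Fundamental} (work with $n$ variables, then let $n$ grow).

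By Lemma \ref{lemmaexp} and the Lemma on the exponential form of the e\~ne product,
$$
(f\star g)^{-1}=\exp\Bigl(\sum_{n\geq1}\tfrac1n N_n(X)N_n(Y)T^n\Bigr).
$$
Now write any $h\in\cA$ in the Witt form $h=\prod_k(1-x_kT^k)^{-1}$. Comparing logarithms gives $h=\exp\bigl(\sum_n\frac1n W_n(h)T^n\bigr)$, where $W_n(h)=-\,n\,[T^n]\log h$.

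Apply this to $f$ and $g$. From the first formula of Lemma \ref{lemmaexp}, $\log f=-\sum_n\frac1n N_n(X)T^n$, so $W_n(f)=-N_n(X)$; likewise $W_n(g)=-N_n(Y)$. The defining property of the Witt product then gives
$$
W_n(f\star_w g)=W_n(f)\,W_n(g)=N_n(X)N_n(Y).
$$
From the display above, $W_n\bigl((f\star g)^{-1}\bigr)=N_n(X)N_n(Y)$ as well. Over a $\QQ$-algebra the ghost map $h\mapsto(W_n(h))_n$ is injective, because $\log$ is bijective from $\cA$ onto $T\,\QQ[[\ldots]][[T]]$. Hence $(f\star g)^{-1}=f\star_w g$ for the generic $f,g$. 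Equivalently, $\cD(f\star_w g)=-\cD f\odot\cD g=-\cD(f\star g)=\cD\bigl((f\star g)^{-1}\bigr)$ by the Hadamard-product Corollary, and $\cD$ is injective on $\cA$ in characteristic $0$.

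It remains to descend to an arbitrary ring $A$. The coefficients of both $(f\star g)^{-1}$ and $f\star_w g$ are polynomials with integer coefficients in $a_1,\dots,a_n,b_1,\dots,b_n$. The generic identity shows that these two integral polynomials agree after substituting the algebraically independent elementary symmetric functions. Therefore they are equal as polynomials in $\ZZ[a,b]$, and the identity holds in every ring $A$.

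\textbf{Main obstacle.} This descent step is where care is needed. One must check that the sign convention $W_n(f)=-N_n(X)$ for the generic $f$ is consistent with the definition of $W_n$ via the Witt coordinates $x_k$. One must also check that integrality of the Witt product's universal polynomials is available. In this paper that is not proved but is taken as part of the definition of the Big Witt ring; if needed, it follows here from the equality itself, since the left side is manifestly integral.
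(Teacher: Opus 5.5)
Your proof is correct and is essentially the paper's. The paper's one-line proof is exactly your closing remark, $\cD\bigl((f\star g)^{-1}\bigr)=-\cD(f\star g)=-\cD f\odot\cD g=\cD(f\star_w g)$, computed in the characteristic-zero ring of variables; your ghost-component comparison is the same computation before differentiating, and the paper defers the passage to arbitrary $A$ via integral universal polynomials to the next corollary.

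One slip to fix: from $h=\exp\bigl(\sum_n\frac1n W_n(h)T^n\bigr)$ you get $W_n(h)=+n\,[T^n]\log h$, not $-n\,[T^n]\log h$. The plus sign is the one you actually use to get $W_n(f)=-N_n(X)$ and $W_n\bigl((f\star g)^{-1}\bigr)=N_n(X)N_n(Y)$, and the sign matters: with the opposite sign the same computation would identify $\star_w$ with $\star$ rather than with $\check\star$.
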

\begin{proof}
Observe that $\cD (f(T)^{-1}) = -\cD f(T)$.
\end{proof}

\begin{corollary}
The result holds for an arbitrary commutative ring $A$.
\end{corollary}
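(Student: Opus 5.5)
The plan is to reduce the statement for an arbitrary commutative ring $A$ to the characteristic zero identity $\check\star=\star_w$ already obtained, using universality. On $\cA(A)$ both products are defined coefficientwise by universal polynomials with integer coefficients. For $\star$ these are the $C_n$ of Lemma \ref{lem:Fundamental} together with the universal polynomials giving the coefficients of an inverse in $1+XA[[X]]$; for $\star_w$ they are the classical Witt multiplication polynomials. So the $n$-th coefficient of $f\check\star g$ is $P_n(a_1,\dots,a_n,b_1,\dots,b_n)$, the $n$-th coefficient of $f\star_w g$ is $Q_n(a_1,\dots,a_n,b_1,\dots,b_n)$, and $P_n,Q_n\in\ZZ[X_1,\dots,X_n,Y_1,\dots,Y_n]$. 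It then suffices to prove the polynomial identity $P_n=Q_n$ in $\ZZ[X,Y]$, and specialize via the unique ring morphism $\ZZ[X,Y]\to A$ sending the variables to the coefficients of $f$ and $g$.

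To prove $P_n=Q_n$, I use that $\ZZ[X,Y]$ injects into $\QQ[X,Y]$. It is then enough to check the identity over a $\QQ$-algebra that contains power series $f,g$ whose first $n$ coefficients are algebraically independent. Take $f=\prod_{k\le N}(1-X_kT)$ and $g=\prod_{l\le N}(1-Y_lT)$ with $N\ge n$, in $\QQ[[(X_k),(Y_l)]][[T]]$. Their coefficients are the elementary symmetric functions, and these are algebraically independent. The previous corollary, via the exponential forms and the Hadamard identities for $\cD$, gives $\cD(f\check\star g)=-\cD f\odot\cD g=\cD(f\star_w g)$. Since $\cD$ is injective on $1+T R[[T]]$ when $R$ is a $\QQ$-algebra (a logarithmic derivative determines the series with constant term $1$), we get $f\check\star g=f\star_w g$ there. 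Hence $P_n(\Sigma(X),\Sigma(Y))=Q_n(\Sigma(X),\Sigma(Y))$. Algebraic independence of the $\Sigma_k$ for $k\le N$ then yields $P_n=Q_n$ as polynomials.

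The main obstacle is bookkeeping consistency in the characteristic zero step. The $\star$ on the split products must be the one given by the universal $C_n$, which holds by construction on $P_A$. The Witt product used in the previous corollary must be the one defined by $W_n(f\star_w g)=W_n(f)W_n(g)$, which is exactly what the exponential form encodes. One also has to note that the infinite-variable computation truncates to finitely many variables without changing the first $n$ coefficients. Once the identity holds for universal polynomials with integer coefficients, the passage to arbitrary $A$, including torsion or characteristic $p$, is automatic.
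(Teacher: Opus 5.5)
Your proposal is correct and follows the paper's own argument. Both products are given coefficientwise by universal polynomials with integer coefficients, the identity $\check\star=\star_w$ is checked in a characteristic zero ring of variables, and one then specializes to an arbitrary commutative ring $A$. You add two steps the paper only sketches: the algebraic independence of the elementary symmetric functions, and the injectivity of $\cD$ over a $\QQ$-algebra. These show that the identity on generic split products is an identity of polynomials in the coefficients, so it holds for every $f,g\in\cA(A)$ and not only for split ones.
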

\begin{proof}
We have universal polynomials $(Q_n^w)$ with
{\small
$Q_n^w\in \ZZ[Z_1,\ldots , Z_n]$ such that
$$
f\star_w g = 1+\sum_{k\geq 1} Q_k^w \, T^k = \left ( 1+\sum_{k\geq 1} Q_k \, T^k\right )^{-1}  =(f\star g)^{-1}\ .
$$
}
We can replace variables $(X_k)_{ k\geq 1}$ and $(Y_k)_{ k\geq 1}$ with arbitrary
values in the ring $A$, including the case when $A$ has non-zero characteristic since the polynomials have integer coefficients. The identity remains true for all commutative rings.
\end{proof}

\subsection{Action on divisors of the Big Witt multiplication.}

The following Corollary is obvious from our construction. We single it out
because it is generally overlooked in the standard references (except in
\cite{Ber} Appendix 8). We define the split ``rational'' functions $S_A$ as those 
elements in $\cA(A)$ that can be writen as finite products
$$
f(X)=\prod_{a\in A} (1-aX)^{n_a}
$$
with $n_a\in \ZZ$. Note that $(1-aX)^{-1} =1 +aX+a^2X^2+\ldots \in \cA(A)$.

\begin{corollary}
The multiplicative sub-group  $S_A\subset \cA(A)$ is invariant by the Big Witt ring multiplication.
\end{corollary}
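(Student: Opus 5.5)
The plan is to reduce the statement to the two facts already established: the e\~ne product preserves $S_A$, and the Big Witt multiplication is its inverse twist. By the Main Theorem, proved in Section \ref{sect:identification} and valid for every commutative ring $A$ by the last Corollary there, we have for all $f,g\in\cA(A)$
$$
f\star_w g=(f\check\star g)=(f\star g)^{-1} .
$$
So for $f,g\in S_A$ it is enough to show two things. First, $f\star g\in S_A$. Second, $S_A$ is closed under inversion in $\cA(A)$.

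The second point holds by definition, since $S_A$ is the multiplicative subgroup of $\cA(A)$ generated by $P_A$.

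For the first point, I would invoke the Corollary in Section \ref{sect:extension}. It states that the e\~ne product on $\cA(A)$, defined through the universal polynomials $(C_n)$, coincides on $S_A$ with the product given by formula (\ref{eq:ene_formula}). Write $f=\prod_a(1-aX)^{n_a}$ and $g=\prod_b(1-bX)^{m_b}$ with $n_a,m_b\in\ZZ$, almost all zero. Then
$$
f\star g=\prod_{a,b}(1-abX)^{n_am_b},
$$
which is a finite product of factors $(1-cX)^{\pm1}$ and therefore lies in $S_A$. Equivalently, Corollary \ref{cor:def_ene} says the twisted divisor of $f\star g$ is $\#\Div f\star_A\#\Div g\in\cD_0(A)/\Ker\varphi_A$, which corresponds to an element of $S_A$. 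Combining, $f\star_w g=(f\star g)^{-1}=\prod_{a,b}(1-abX)^{-n_am_b}\in S_A$.

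This also gives the explicit divisor action
$$
\#\Div(f\star_w g)=-\,\#\Div(f)\star_A\#\Div(g).
$$
This formula is the content worth singling out.

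The only delicate step is the identification of the two e\~ne products on all of $S_A$, not merely on $P_A$. For a general ring, $S_A$ need not lie in the closure of $P_A$, so density cannot be used. That step, however, is exactly the preceding Corollary, which rests on universality and the identities $f^{-1}\star g^{-1}=f\star g$ and $f^{-1}\star g=(f\star g)^{-1}$. With it in hand, the present statement follows directly.
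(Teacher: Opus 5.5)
Your proof is correct and follows the paper's route. The paper gives no written proof; it calls the corollary ``obvious from our construction'', meaning exactly your argument: $\star_w$ is the twisted product $(f\star g)^{-1}$, the e\~ne product on $\cA(A)$ restricts on $S_A$ to formula (\ref{eq:ene_formula}) by the Corollary at the end of Section \ref{sect:extension}, and $S_A$ is closed under inversion.
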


This is to be compared with the invariance of $P_A\subset A[X]$ by the e\~ne product.

\subsection{Density of $P_A$ and $S_A$ and counterexamples.} \label{counterexample}

\begin{proposition}
Let $A$ be a commutative ring such that for every $n\geq 2$, every $a\in A$
has an $n$-th root, and $1-X^n \in P_A$. Then, $P_A$ and $S_A$ are dense in $\cA(A)$.
\end{proposition}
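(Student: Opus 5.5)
Since $P_A\subset S_A$, density of $P_A$ in $\cA(A)$ implies density of $S_A$. So I only need to show: for every $f\in\cA(A)$ and every $N\geq 1$ there is $P\in P_A$ with $f\equiv P \pmod{X^{N+1}}$. This is the Krull topology statement.

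The strategy is an inductive correction of the coefficients one degree at a time. The basic building block is, for $n\geq 1$ and $a\in A$, the polynomial $1-aX^n$. I claim $1-aX^n\in P_A$. Choose $\alpha\in A$ with $\alpha^n=a$, and write the assumed splitting $1-X^n=\prod_{j}(1-\zeta_jX)$ with $\zeta_j\in A$. This is a product of $n$ linear factors after discarding trivial factors equal to $1$; I should note that factors $(1-0\cdot X)$ are harmless. Substituting $X\mapsto \alpha X$, which is the ring endomorphism $\psi_\alpha$ used in Lenstra's proof, gives
$$1-aX^n=\prod_j(1-\zeta_j\alpha X)\in P_A.$$
For $n=1$ this is trivial.

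Now for the induction. Suppose $P_k\in P_A$ satisfies $f\equiv P_k \pmod{X^{k+1}}$; start with $P_0=1$. Write $f-P_k\equiv c\,X^{k+1}\pmod{X^{k+2}}$ and put $P_{k+1}=P_k\cdot(1+cX^{k+1})$. Here $1+cX^{k+1}=1-(-c)X^{k+1}\in P_A$ by the previous step, using an $(k+1)$-th root of $-c$. Since $P_k$ has constant term $1$,
$$P_k(1+cX^{k+1})\equiv P_k+cX^{k+1}\equiv f \pmod{X^{k+2}}.$$
Because $P_A$ is a multiplicative semigroup, $P_{k+1}\in P_A$. After $N$ steps this gives the required approximation. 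Letting $N\to\infty$ shows that $f$ is a limit of the $P_N$ in the Krull topology.

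The main point to check carefully is the first step: that the hypothesis "$1-X^n\in P_A$" really yields a factorization into linear factors, and that substituting $\alpha X$ preserves membership in $P_A$. The latter is clear because $\psi_\alpha(1-\zeta X)=1-\zeta\alpha X$. Everything else is routine truncated power-series arithmetic, in which only the constant term $1$ of $P_k$ matters.
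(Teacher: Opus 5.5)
Your proof is correct and follows essentially the paper's route. The paper writes $f=\prod_{n\geq 1}(1-a_nX^n)$, citing Cartier, and splits each factor using an $n$-th root; your degree-by-degree correction $P_{k+1}=P_k(1+cX^{k+1})$ is a self-contained proof of that same product decomposition, and your $P_N$ are its partial products. Your splitting step, which applies $X\mapsto\alpha X$ to whatever factorization of $1-X^n$ the hypothesis provides, is if anything more careful than the paper's $\prod_{\omega^n=1}(1-b\omega X)$: that product has to be read as running over the factors of such a splitting, not over the set of $n$-th roots of unity. (Your aside that this splitting has exactly $n$ nontrivial factors is unused and can fail when $A$ has nilpotents, e.g.\ $(1-2X)^2=1$ over $\ZZ/4\ZZ$.)
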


\begin{proof}
From \cite{Ca} (or \cite{La}, exercise in chapter VIII) we have that
any element $f\in \cA(A)$ can be written as an
infinite product
$$
f(X)=\prod_{n=1}^{+\infty} (1-a_nX^n)^{-1}
$$
Also, considering the inverse of $f$, any $f\in \cA(A)$, can be written as
$$
f(X)=\prod_{n=1}^{+\infty} (1-a_nX^n)
$$
If $A$ satisfies the conditions, then for every $n\geq 1$,
each factor $1-aT^n$ splits as
$$
1-aX^n=1-(bX)^n=\prod_{\omega^n=1} (1-b\omega X)
$$
where $b^n=a$. This proves that finite products that build $P_A$ and $S_A$ are dense in $\cA(A)$.
\end{proof}

\begin{corollary}
 When $A$ is an algebraically closed field then $P_A$ and $S_A$ are dense in $\cA(A)$.
\end{corollary}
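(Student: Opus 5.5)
The corollary follows from the preceding Proposition once we check that an algebraically closed field $A$ satisfies its two hypotheses. So the plan is to verify, for every $n\geq 2$, that (i) every $a\in A$ has an $n$-th root and (ii) the polynomial $1-X^n$ lies in $P_A$, i.e.\ splits as a product of factors $1-cX$ with $c\in A$. Once both are checked, we quote the Proposition directly and conclude that $P_A$ and $S_A$ are dense in $\cA(A)$ for the Krull topology.

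For (i), the polynomial $Y^n-a\in A[Y]$ has degree $n\geq 1$. Algebraic closure gives it a root $b\in A$, so $b^n=a$. This works uniformly in every characteristic, including $a=0$ with $b=0$.

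For (ii), we factor $Y^n-1=\prod_{i=1}^n (Y-\omega_i)$ over $A$, with the roots $\omega_i$ listed with multiplicity; this again uses algebraic closure. We then homogenize by substituting $Y=1/X$ and multiplying by $X^n$, which gives
$$
1-X^n=\prod_{i=1}^n (1-\omega_i X)
$$
as an identity in $A[X]$. Hence $1-X^n\in P_A$.

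The only point requiring care is positive characteristic $p$ with $p\mid n$, where the roots of unity are not distinct. The argument above never uses distinctness: it uses only a factorization with multiplicities. So the product $\prod_{\omega^n=1}(1-b\omega X)$ appearing in the proof of the Proposition should be read as running over the $\omega_i$ with multiplicity. Scaling by $b$ then gives $1-aX^n=\prod_i (1-b\omega_i X)$, which is exactly what that proof needs.

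I expect no real obstacle. The only step needing a remark is this multiplicity issue, which I would handle by phrasing the factorization of $1-X^n$ with repeated roots allowed.
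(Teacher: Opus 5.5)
Your proposal is correct and follows the paper's route: the corollary comes from checking that an algebraically closed field satisfies the hypotheses of the preceding Proposition (existence of $n$-th roots and $1-X^n\in P_A$), and you verify both correctly. Your remark that the paper's product $\prod_{\omega^n=1}(1-b\omega X)$ must be read with multiplicity when the characteristic divides $n$ is a worthwhile clarification, since the proof only uses the splitting $1-X^n=\prod_{i=1}^n(1-\omega_i X)$.
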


We have counter-examples for non-algebraically closed fields.

\begin{proposition}
The group $S_\RR$ is not dense in $\cA(\RR)$.
\end{proposition}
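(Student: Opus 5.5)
The natural route to non-density goes through Newton sums. I will run it far enough to reach the decisive step. As explained below, I expect that step to fail, and I do not have a proof of the statement as worded.

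\textbf{Step 1: translate density into Newton sums.} Apply the operator $\cD$ of the Corollary on the Hadamard product to
$$
f(X)=\prod_{a\in \RR}(1-aX)^{n_a}\in S_\RR .
$$
This gives
$$
\cD f(X)=\sum_{k\geq 1}\Bigl(\sum_{a} n_a a^k\Bigr)X^k .
$$
So the $k$-th Newton sum of $f$ is $N_k(f)=\sum_a n_a a^k$. Over $\QQ\subset\RR$, the germ of order $m$ of an element of $\cA(\RR)$ corresponds bijectively to the vector $(N_1,\ldots,N_m)\in\RR^m$, by Newton's identities; division by $k$ is allowed over $\RR$. Hence $S_\RR$ is dense for the Krull topology if and only if, for every $m$, the additive subgroup
$$
G_m\subset\RR^m \quad\text{generated by the moment curve}\quad c(a)=(a,a^2,\ldots,a^m),\ a\in\RR,
$$
equals $\RR^m$. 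The plan is therefore to exhibit some $m$ and some vector $v\in\RR^m\setminus G_m$. The element of $\cA(\RR)$ with those Newton sums would then have no approximant of order $m$ in $S_\RR$.

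\textbf{Step 2: look for an invariant.} The candidate would be a $\ZZ$-linear functional, or a congruence or sign condition, satisfied by every $c(a)$ but not by some target vector. This is exactly what works for $P_\RR$, where $N_2=\sum n_a a^2\geq 0$ because all $n_a\geq 0$.

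\textbf{Main obstacle.} For $S_\RR$ the multiplicities $n_a$ may be negative, and this is where I expect the plan to break down. Take the $m$-th finite difference of $c$ with step $h$, namely
$$
\Delta_h^m c(x)=\sum_{j=0}^m (-1)^{m-j}\binom{m}{j}\,c(x+jh)\in G_m .
$$
Its coordinates are $\Delta_h^m x^k$. These vanish for $k<m$, and for $k=m$ the coordinate equals $m!\,h^m$. Letting $h$ vary over $\RR$ and allowing a sign gives $\{0\}^{m-1}\times\RR\subset G_m$. The first coordinate alone already fills $\RR$ through $c(a)$. Triangular induction on $m$ then yields $G_m=\RR^m$.

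For instance, with $m=2$ one has $c(a)-c(-a)=(2a,0)$ and $c(a)+c(-a)=(0,2a^2)$. Concretely, $1+X^2$ agrees to order $3$ with $(1-X)^{-1}(1+X)^{-1}$. This argument indicates that $S_\RR$ is in fact dense in $\cA(\RR)$ for the Krull topology.

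\textbf{What I would do next.} I would check the precise topology or subset the authors intend in the proof that follows. The statement would be provable as a claim about $P_\RR$, via the invariant $N_2\geq 0$. It would also be provable for a topology finer than the Krull topology. For the statement exactly as worded, I would not expect a correct proof to exist.
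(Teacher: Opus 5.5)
Your conclusion is right, and you can drop the hedge. The finite-difference argument is a complete proof that $S_\RR$ is dense in $\cA(\RR)$ for the Krull topology, so the proposition as stated is false.

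Each step of your argument holds:
\begin{itemize}
\item The reduction is sound. The map $f\mapsto\cD f$ is a group isomorphism $(\cA(\RR),\cdot)\to(X\RR[[X]],+)$. It carries $1+X^{n}\RR[[X]]$ onto $X^{n}\RR[[X]]$, because each $k$ is invertible in $\RR$.
\item The vector $\Delta_h^m c(x)=(0,\ldots,0,m!\,h^m)$ lies in $G_m$.
\item The triangular induction then closes.
\end{itemize}
As a check beyond order $3$, $(1-3X^2)(1-2X^2)^{-2}=1+X^2-4X^6+\cdots$, and both factors split over $\RR$.

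The paper's proof fails through a factor of two, because it uses $\sum_{k\neq l}$ in two different senses:
\begin{itemize}
\item The coefficient of $X^2$ in $\prod_k(1+a_kX)^{\epsilon_k}$ is $\sum_{k<l}\epsilon_k\epsilon_la_ka_l+\sum_{\epsilon_k=-1}a_k^2$, a sum over unordered pairs.
\item In the square, $(\sum_k\epsilon_ka_k)^2=\sum_ka_k^2+2\sum_{k<l}\epsilon_k\epsilon_la_ka_l$, which counts ordered pairs.
\end{itemize}
Substituting correctly yields $0=2+\sum_{\epsilon_k=1}a_k^2-\sum_{\epsilon_k=-1}a_k^2$, that is, $\sum_k\epsilon_ka_k^2=-2$. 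This is just the condition $N_2=-2$, and it is satisfiable. Taking $a_1=1$, $a_2=-1$, $\epsilon_1=\epsilon_2=-1$ gives your example $(1+X)^{-1}(1-X)^{-1}=(1-X^2)^{-1}\equiv 1+X^2\pmod{X^4}$.

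The corrected computation does prove your fallback, that $P_\RR$ is not dense in $\cA(\RR)$. On $P_\RR$ one has $N_2=\sum_ka_k^2\geq 0$, while $N_2(1+X^2)=-2$.

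The paper's next proposition ($P_\RR$ not dense in $S_\RR$) contains the same slip. There the corrected computation gives $\sum_ka_k^2=-a^2<0$, so that result survives.
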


\begin{proof}
Consider $A=\RR$. We prove that  $1+X^2 \in \cA(\RR)$ is not in the closure of
$S_\RR $. Otherwise  there are finite sequences $(a_k)_{1\leq k\leq n}$, $a_k\in \RR$, and  $(\epsilon_k)_{1\leq k\leq n}$, $\epsilon_k=\pm 1$, such that
$$
\prod_{k=1}^n (1+a_k X)^{\epsilon_k} = 1+X^2 +\cO(X^3)
$$
then from  the coefficients of $X$ and $X^2$ we have
$$
\sum_{k} \epsilon_k a_k =0 \ \ \text{and} \ \ \sum_{k\not= l} \epsilon_k \epsilon_l a_k a_l + \sum_{\epsilon_k=-1}  a_k^2 =1 \ .
$$
The second Newton relation gives a contradiction
$$
0=\left ( \sum_{k} \epsilon_k a_k \right )^2 = \sum_{k} a_k^2 + \sum_{k\not= l} \epsilon_k \epsilon_l a_k a_l= \sum_{k} a_k^2 + \left ( 1- \sum_{\epsilon_k=-1} a_k^2 \right )
= 1+\sum_{\epsilon_k=1} a_k^2 \geq 1 \ .
$$
\end{proof}

Also, in general we don't have that $P_A$ is dense in $S_A$.

\begin{proposition}
 The semigroup $P_\RR$ is not dense in the group $S_\RR$.
\end{proposition}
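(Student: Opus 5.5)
We exhibit an element of $S_\RR$, namely $f(X)=(1-X)^{-1}=1+X+X^2+\ldots$, that is not in the closure of $P_\RR$ for the Krull topology. It suffices to show that no split polynomial $P(X)=\prod_{k=1}^n(1-a_kX)$ with $a_k\in\RR$ satisfies
$$
P(X)=1+X+X^2+\cO(X^3) \ .
$$
The argument is the same Newton-relation trick used in the preceding proof for $1+X^2$: reading off the coefficients of $X$ and $X^2$ gives constraints on the elementary symmetric functions of the $a_k$. Over $\RR$, these can only be compatible with the positivity of the power sum $\sum a_k^2$ in certain ranges.

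Concretely, we first expand
$$
\prod_k(1-a_kX)=1-\Sigma_1 X+\Sigma_2X^2+\ldots
$$
with $\Sigma_1=\sum_k a_k$ and $\Sigma_2=\sum_{k<l}a_ka_l$. Matching coefficients forces $\Sigma_1=-1$ and $\Sigma_2=1$. We then invoke the second Newton identity,
$$
N_2=\sum_k a_k^2=\Sigma_1^2-2\Sigma_2 \ ,
$$
which gives $N_2=1-2=-1$. This is impossible since $N_2\geq 0$ for real $a_k$. Hence every split real polynomial $P$ satisfies $P-f\notin X^3\RR[[X]]$, so $f$ is not in the closure of $P_\RR$. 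Since $f=(1-X)^{-1}\in S_\RR$, this proves the claim.

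The only real choice is picking a witness in $S_\RR$ whose low-order coefficients violate the real-rootedness inequality $\Sigma_1^2\geq 2\Sigma_2$, which is a necessary condition for the coefficients of $P_\RR$. The geometric series $(1-X)^{-1}$ does this, since $\Sigma_1^2-2\Sigma_2=1-2<0$. No step is a genuine obstacle; the care needed is only in the signs of the coefficients in the expansion.
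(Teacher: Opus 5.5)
Your proof is correct and is essentially the paper's argument. The paper takes the witness $(1+aX)^{-1}$ for an arbitrary real $a\neq 0$, of which your $(1-X)^{-1}$ is the case $a=-1$. Like you, it matches the coefficients of $X$ and $X^2$ against a real split polynomial and uses $\bigl(\sum_k a_k\bigr)^2=\sum_k a_k^2+2\sum_{k<l}a_ka_l$ to force an impossible condition on $\sum_k a_k^2$. Your bookkeeping via $N_2=\Sigma_1^2-2\Sigma_2$ is in fact cleaner than the paper's: the paper equates the $X^2$ coefficient with $\sum_{k\neq l}a_ka_l$ and so drops a factor $2$, harmlessly, since the correct count gives $\sum_k a_k^2=-a^2<0$.
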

\begin{proof}
Let $a\in \RR$ with $a\not=0$. We prove that $(1+aX)^{-1}$ is not in the closure of $P_\RR$. We have
$$
(1+aX)^{-1} = 1-aX+a^2 X^2 +\ldots
$$
By contradiction there would be a sequence $(a_k)_{1\leq k\leq n}$, $a_k\in \RR$, such that
$$
\prod_{k=1}^n (1+a_k X) = 1-aX+a^2X^2 +\cO(X^3).
$$
Then we have
$$
\sum_{k=1}^n a_k =-a \ \ \text{and} \ \ \sum_{k\not= l} a_k a_l =a^2 \ .
$$
so
$$
a^2=\left (\sum_{k=1}^n a_k \right )^2 =\sum_{k=1}^n a_k^2 + \sum_{k\not= l} a_k a_l = \sum_{k=1}^n a_k^2 + a^2
$$
therefore
$$
\sum_{k=1}^n a_k^2 =0
$$
which implies $a_k=0$ for all $k\geq 1$ and $a=0$. Contradiction.

\end{proof}

\subsection{Functoriality of the e\~ne product.}

We prove the functoriality property given by Lenstra \cite{Len} for the
Big Witt ring structure and the e\~ne ring structure
(we give the statement in this last situation).
As noted by Lenstra,
$\cA$ is a functor from the  category of rings into the category
of abelian groups. We prove that it is also a functor of rings
with $\cA(A)$ endowed with the e\~ne ring structure.

\begin{theorem}
A morphism $\varphi: A\to B$  of commutative rings induces a commutative diagram
\[
\xymatrix{
 \mathcal{A}(A) \times \mathcal{A}(A) \ar@<0ex> [r]^{\quad \, \star}
 \ar@<0ex>[d]_{(\mathcal{A}(\varphi),\mathcal{A}(\varphi))} &\mathcal{A}(A)
 \ar@<0ex>[d]^{\mathcal{A}(\varphi)} \\
  \mathcal{A}(B) \times \mathcal{A}(B)   \ar@<0ex>[r]^{\quad \, \, \star}     & \mathcal{A}(B)
}
\]
\end{theorem}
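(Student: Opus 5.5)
Our plan is to reduce the statement to the universality of the coefficient formulas from Corollary \ref{cor:universality}. First we fix what $\cA(\varphi)$ means: for $f(X)=1+\sum_{n\geq 1} a_n X^n\in\cA(A)$ we set
$$
\cA(\varphi)(f)(X)=1+\sum_{n\geq 1}\varphi(a_n)X^n \in \cA(B) \ .
$$
Because $\varphi$ is a unitary ring morphism, this map respects the ordinary product of power series, so $\cA(\varphi)$ is a morphism of the multiplicative groups, which is the additive structure of the e\~ne ring. Commutativity of the diagram then reduces to the single identity $\cA(\varphi)(f\star g)=\cA(\varphi)(f)\star\cA(\varphi)(g)$ for all $f,g\in\cA(A)$.

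To prove this identity we compare coefficients. By Lemma \ref{lem:Fundamental} and Corollary \ref{cor:universality}, the $n$-th coefficient of $f\star g$ is $C_n(a_1,\ldots,a_n,b_1,\ldots,b_n)$, where $C_n\in\ZZ[X_1,\ldots,X_n,Y_1,\ldots,Y_n]$ is the same universal polynomial for every ring. A ring morphism commutes with the evaluation of any polynomial with integer coefficients, since it preserves sums and products and sends $1$ to $1$, and therefore respects the image of $\ZZ$. This gives
$$
\varphi\bigl(C_n(a_1,\ldots,b_n)\bigr)=C_n\bigl(\varphi(a_1),\ldots,\varphi(b_n)\bigr),
$$
and the right-hand side is precisely the $n$-th coefficient of $\cA(\varphi)(f)\star\cA(\varphi)(g)$. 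The constant coefficients are both $1$, so the identity holds.

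For completeness we would also note that the e\~ne unit $1-X$ is sent to $1-X$, and that $\cA(\varphi)$ is continuous for the Krull topology because it maps $1+X^nA[[X]]$ into $1+X^nB[[X]]$. Hence $\cA$ is a functor into topological (unitary) rings, and the same argument applies to $\check\star=\star_w$, because taking inverses is also given by universal integer polynomials.

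The only point that needs care, which we regard as the main obstacle, is that the $C_n$ really are independent of the ring. This independence is exactly what the proof of Lemma \ref{lem:Fundamental} gives: the $C_n$ are constructed in $\ZZ[X,Y]$ through the fundamental theorem on symmetric functions. One could instead argue through split factorizations, using that $\varphi$ maps $P_A$ into $P_B$ via $1-aX\mapsto 1-\varphi(a)X$ and that formula (\ref{eq:ene_formula}) is then visibly preserved. That route, however, only covers $P_A$, which need not be dense in $\cA(A)$ (see Section \ref{counterexample}). So we rely on the universal-polynomial argument above rather than on density.
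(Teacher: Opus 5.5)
Your proof is correct, and it is more direct than the paper's. You use the definition of $\star$ on $\cA(A)$ from Corollary~\ref{cor:universality}: the $n$-th coefficient of $f\star g$ is $C_n(a_1,\ldots,a_n,b_1,\ldots,b_n)$, where $C_n$ is an integer polynomial that does not depend on the ring. A unitary ring morphism commutes with evaluating such polynomials, so the diagram commutes coefficientwise.

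The paper instead lifts everything to enveloping rings. It takes the surjections $\pi_A:\ZZ[(X_a)_{a\in A}]\to A$ and $\pi_B:\ZZ[(X_b)_{b\in B}]\to B$ and the morphism $X_a\mapsto X_{\varphi(a)}$ between the polynomial rings. It notes that the resulting square commutes, and then asserts that the quotient maps induce morphisms of e\~ne rings.

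To conclude from that square, one needs three facts: $\cA(\pi_A)$, $\cA(\pi_B)$ and the map induced by $X_a\mapsto X_{\varphi(a)}$ are e\~ne morphisms, and $\cA(\pi_A)$ is surjective (which it is). Each of these e\~ne-morphism claims is an instance of the statement being proved. Their only justification is the universality of the $C_n$ that you invoke. So your argument supplies the step the paper leaves implicit, and it avoids the detour.

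What the enveloping-ring picture buys is conceptual: it reduces questions to torsion-free rings of characteristic zero. There, as the Remark after the proof explains, identities can be checked through exponential forms and Newton sums. For the bare commutativity of the diagram, however, it adds nothing beyond your observation.

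Your closing remark is also right: the split-factorization route only reaches $P_A$, which need not be dense in $\cA(A)$, so it would not suffice.
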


\begin{proof}
We consider the enveloping ring of polynomials  $\ZZ[(X_a)_{a\in A}]$ with an infinite, eventually uncountable, number of variables. 
We refer to Bourbaki's Algebra Chapter IV section 1 \cite{Bou} for calculus on formal power 
series in an infinite number of variables.
There is a surjective quotient
$$
\pi_A : \ZZ[(X_a)_{a\in A}] \to A
$$
such that $\pi_A(X_a)=a$ where the kernel encodes all the ring axioms and the ring relations.
The map $\varphi: A\to B$ induces a map $(X_a)_{a\in A} \rightarrow (X_b)_{b\in B}$ (defined by $X_a \mapsto X_{\varphi(a)}$) and a ring morphism $\ZZ[(X_a)_{a\in A}] \rightarrow \ZZ[(X_b)_{b\in B}]$. We have a commuting diagram
\[
\xymatrix{
 \ZZ[(X_a)_{a\in A}] \ar@<0ex> [r]
 \ar@<0ex>[d]_{\pi_A} & \ZZ[(X_b)_{b\in B}]
 \ar@<0ex>[d]^{\pi_B} \\
   A  \ar@<0ex>[r]^{\varphi}     & B
}
\]
The quotient also defines a ring morphism between e\~ne
rings $(\cA(\ZZ[(X_a)_{a\in A}]), .,\star)$ and $(\cA(A), ., \star)$. The result follows.

\end{proof}

\textbf{Remark.} In the proof of the properties of
the e\~ne product in section \ref{sect:identification}, we use
the ring of formal power series in a countable set of variables $\QQ[[(X_k)_{k\geq 1}, (Y_k)_{k\geq 1}, \ldots ]]$.
In this section, for different purposes,
we consider the enveloping ring of polynomials
$\ZZ[(X_a)_{a\in A}]$ which is generated  a priori
by an uncountable number of variables.
In the next section we consider a  ring of formal power series with an a priori
uncountable number of variables. One should not be confused with the instrumental countable variables that
serve to construct the e\~ne product and to prove its properties, and those other
variables that are used
to construct the enveloping ring that serve for establishing functorial properties.
Rings of variables are useful in proving the properties of the e\~ne product because they are of characteristic zero (hence, for example, we
can write directly exponential forms without needing a yoga
with logarithmic derivatives). Note that if the ring $A$ has non-zero characteristic it cannot be embedded into a larger ring of characteristic zero. This is the  reason
why is natural to consider
enveloping rings. Their usefulness can be seen, for example,
at the begining of section 9.10 in \cite{Ha} where there is some confusion and the ``larger ring''
considered should instead be  an enveloping ring.


\subsection{Short self-contained proof.}

Consider the ring
$$
A=\QQ[[(X_k)_{k\geq 1}, (Y_l)_{l\geq 1},(Z_m)_{m\geq 1},\ldots ]]
$$
of commuting variables\footnote{Maybe this time an uncountable number, but we write them in finite or
countable groups.},
the multiplicative group $\cA(A) =1+TA[[T]]$ and the subgroup
$S[T]\subset A[T]$ generated by $(1-XT)_{X}$, where the $X$ run over monomials of $A$, i.e.
$f\in S[T]$ if there is a finite or countable sequence $(X_k)_{k\geq 1}$ of monomials in $A$ such that
{\small
$$
{f(T) =\prod_{k=1}^{+\infty} (1-X_k T) = \sum_{k\geq 0} \Sigma_k(X)T^k}
$$}
where $\Sigma_k(X)$ is the $k$-th elementary symmetric function on the monomials
$(X_k)_{k\geq 1}$ ($\Sigma_0(X)=1$).
\begin{definition}
Let $f,g \in S[T]$,
we define the e\~ne product $f\star g\in S[T]$ by
{\small
$$
f\star g =\left (\prod_{k=1}^{+\infty} (1-X_k T)\right ) \star \left (\prod_{l=1}^{+\infty} (1-Y_l T)\right ) =\prod_{k,l\geq 1} (1-X_k Y_l T)
=\sum_{k\geq 0}\Sigma_k(X,Y) T^k
$$
}
\end{definition}
\begin{lemma}
$(S(T),.,\star)$ is a commutative ring.
\end{lemma}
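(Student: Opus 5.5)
Throughout, $S(T)$ denotes the group $S[T]$ of the definition. The plan is to reduce every ring axiom to an identity between products of the form $\prod(1-ZT)$ over families of monomials, and to compare such products through their Newton sums. First I check that $\star$ is well defined on $S[T]$. An element $f$ may be written as $\prod_k (1-X_kT)$ in many ways; for instance factors with $X_k=1$ or repeated monomials can be rearranged. Since $A$ has characteristic $0$, Lemma~\ref{lemmaexp} gives
$$f(T)=\exp\left(-\sum_{n\geq 1}\frac1n N_n(X)T^n\right),$$
and taking $-T f'/f$ shows that the sequence $(N_n(X))_{n\geq 1}$ is determined by $f$ alone, independently of the chosen presentation. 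The exponential form of the e\~ne product then gives
$$f\star g=\exp\left(-\sum_{n\geq 1}\frac1n N_n(X)N_n(Y)T^n\right),$$
so $f\star g$ depends only on $f$ and $g$. It lies in $S[T]$ because the products $X_kY_l$ are again monomials and the index set $\{(k,l)\}$ is countable. I will extend $\star$ to quotients $f_1/f_2$ of such products by setting $N_n(f_1/f_2)=N_n(f_1)-N_n(f_2)$ and using the same formula. This is consistent with the rule $f^{-1}\star g=(f\star g)^{-1}$.

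The second step is to observe that the map $N:f\mapsto (N_n(f))_{n\geq1}$ is injective, since $f=\exp(-\sum \frac1n N_nT^n)$ recovers $f$. It sends multiplication in $S[T]$ to termwise addition, because $N_n(fg)=N_n(f)+N_n(g)$ (concatenate the families of monomials). It sends $\star$ to termwise multiplication. Hence $N$ is an injective map from $(S[T],.,\star)$ into the ring $A^{\NN^*}$ with componentwise operations, and it is additive and multiplicative. The image is closed under addition, negation and multiplication, and it contains the image of the unit $1-T$, whose Newton sums are $N_n=1$ for all $n$. Transporting structure along $N$ then shows that associativity, commutativity and distributivity of $\star$ over $.$ all hold in $S[T]$, since they hold componentwise in $A$. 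The unit for $\star$ is $1-T$, because $(1-T)\star \prod(1-X_kT)=\prod(1-X_kT)$ is immediate from the definition.

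The main obstacle is the bookkeeping around infinite products and presentations. I must make sure that $\prod_{k,l}(1-X_kY_lT)$ makes sense in $A[[T]]$ as a product over a countable family, and that the identities $N_n(X)N_n(Y)=\sum_{k,l}(X_kY_l)^n$ are legitimate equalities of power series in infinitely many variables. This requires a summability or Krull-topology argument in $A$, for which I would cite Bourbaki, Chapter IV, on formal power series in infinitely many variables. It also requires closure of $S[T]$ under inverses, which I handle by working in the group generated and extending $\star$ through the rule $f^{-1}\star g=(f\star g)^{-1}$ as above. If summability of the sums $\sum_k X_k^n$ fails, for instance when infinitely many $X_k$ equal $1$, I would restrict $S[T]$ to families of monomials of positive degree tending to infinity, together with finitely many constant factors, so that every sum involved converges.
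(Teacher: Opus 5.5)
Your argument is correct, but it takes a different route from the paper. The paper's proof is the one-liner ``Obvious. If $X_k$ and $Y_l$ are monomials, then $X_kY_l$ is a monomial.'' It checks only that $S[T]$ is closed under $\star$. The ring axioms are meant to be read off from the product formula by re-indexing, exactly as for the e\~ne ring of divisors on a monoid (Proposition \ref{prop:ene_finite_divisors}), here the monoid of monomials:
\begin{itemize}
\item commutativity follows by swapping $(k,l)$;
\item associativity holds because both bracketings give $\prod_{k,l,m}(1-X_kY_lZ_mT)$;
\item distributivity holds because the family of monomials of $gh$ is the concatenation of those of $g$ and $h$;
\item $1-T$ is the unit.
\end{itemize}

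You instead embed $S[T]$ into $A^{\NN^*}$ through the intrinsic Newton sums, defined by $-Tf'/f=\sum_n N_n(f)T^n$, and transport the componentwise ring structure.
\begin{itemize}
\item \emph{What your route buys.} Independence of the presentation comes in one stroke. So does a consistent extension of $\star$ to quotients $f_1/f_2$; these are needed for additive inverses, and the paper's description of $S[T]$ as products of factors $1-X_kT$ glosses over them. Your route also anticipates the exponential forms the paper uses immediately afterwards.
\item \emph{What it costs.} It relies on $\exp$ and $\log$, that is, on $A\supset\QQ$. The re-indexing argument is characteristic-free.
\end{itemize}

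The well-definedness issue is milder than you suggest. Distinct monomials are linearly independent over $\QQ$, so $\Sigma_1=\sum_k X_k$ already determines the family $(X_k)$ up to order. For quotients, $N_1$ determines the formal difference of the two families. A factor $1-T$ (with $X_k=1$) cannot be inserted freely, since it changes $f$.

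The fallback in your last paragraph is unnecessary, and adopting it would change the object. Convergence of the defining product $\prod_k(1-X_kT)$ already forces every monomial, including $1$, to occur only finitely often in the family; look at the coefficient $\Sigma_1=\sum_k X_k$. That condition is exactly what makes $\sum_k X_k^n$ and $\sum_{k,l}(X_kY_l)^n$ summable. It also makes $(X_kY_l)_{k,l}$ admissible again, since a monomial has only finitely many monomial divisors. Requiring degrees to tend to infinity would exclude the paper's basic element $\prod_k(1-X_kT)$ with the $X_k$ the variables themselves.
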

\begin{proof}
Obvious. If $X_k$ and $Y_l$ are monomials, then $X_k Y_l$ is a monomial.
\end{proof}
\begin{lemma}For $n\geq 0$,
$$\Sigma_n(X,Y)=Q_n(\Sigma_1(X),\Sigma_2(X), \ldots ,\Sigma_n(X), \Sigma_1(Y),\Sigma_2(Y), \ldots ,\Sigma_n(Y))
$$
for a universal polynomial $Q_n\in \ZZ[Z_1,\ldots , Z_n]$.
\end{lemma}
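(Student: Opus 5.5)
The plan is to reduce the statement to the Fundamental Theorem on Symmetric Functions, applied twice, exactly as in the proof of Lemma \ref{lem:Fundamental}. The only new ingredient is a truncation argument to handle infinitely many variables. Fix $n\geq 0$.

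First, I will observe that $\Sigma_n(X,Y)$, the coefficient of $T^n$ in $\prod_{k,l}(1-X_kY_lT)$, depends only on finitely many variables modulo terms of high degree. Expanding the product, $\Sigma_n(X,Y)$ is $(-1)^n$ times the sum of all products of $n$ distinct factors $X_kY_l$. Each such product is a product of $n$ of the $X$'s and $n$ of the $Y$'s. So $\Sigma_n(X,Y)$ is bihomogeneous of degree $(n,n)$ in the two groups of variables. It is also symmetric under any permutation of the $X_k$'s and, separately, of the $Y_l$'s.

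Second, I will truncate to $N\geq n$ variables in each group, i.e. set $X_k=Y_l=0$ for $k,l>N$. The truncated $\Sigma_{n,N}(X,Y)$ is a polynomial in $\ZZ[X_1,\dots,X_N,Y_1,\dots,Y_N]$. It is symmetric in the $Y$'s over the base ring $\ZZ[X_1,\dots,X_N]$, so the Fundamental Theorem writes it as a polynomial in $\Sigma_1(Y),\dots,\Sigma_N(Y)$ with coefficients in $\ZZ[X]$. Since these coefficients are unique and the expression is symmetric in the $X$'s, each coefficient is symmetric in the $X$'s. A second application then gives a polynomial in $\Sigma_j(X),\Sigma_j(Y)$ with $j\leq N$ and integer coefficients.

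Third, I will use degree to show that only indices $j\leq n$ occur. Give $\Sigma_j$ weight $j$. By uniqueness in the Fundamental Theorem, the expression is isobaric of weight $n$ in each group, because $\Sigma_{n,N}$ is homogeneous of degree $n$ in each group. Hence no $\Sigma_j$ with $j>n$ can appear, and we get a polynomial $Q_{n,N}$ in $\Sigma_1,\dots,\Sigma_n$ of both groups.

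The main obstacle is independence of $N$, which is needed to pass to infinitely many variables. For this I will compare $N$ and $N+1$ by setting $X_{N+1}=Y_{N+1}=0$. This sends $\Sigma_{n,N+1}$ to $\Sigma_{n,N}$ and sends $\Sigma_j$ to $\Sigma_j$. For $N\geq n$, the elementary symmetric functions $\Sigma_1,\dots,\Sigma_n$ in $N$ variables are algebraically independent. Therefore $Q_{n,N+1}=Q_{n,N}$, and we can define $Q_n$ as this common value. Finally, for arbitrary (possibly infinite) families of monomials, the identity $\Sigma_n(X,Y)=Q_n(\ldots)$ holds coefficientwise in the power series ring. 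Indeed, each monomial coefficient involves only finitely many variables, so it is checked in some truncation. For $n=0$ we take $Q_0=1$.
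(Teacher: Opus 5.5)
Your proposal is correct and follows the paper's route: two successive applications of the Fundamental Theorem on Symmetric Functions, first over $\ZZ[X]$ in the $Y$'s and then over $\ZZ$ in the $X$'s. The paper works directly with $n$ variables in each group and leaves implicit the extra steps you supply: symmetry of the coefficients via uniqueness, the weight argument excluding $\Sigma_j$ for $j>n$, and stabilization in $N$ via algebraic independence (the paper only gestures at this in the proof of Lemma~\ref{lem:Fundamental}); these make the argument more complete rather than different.
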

\begin{proof} The polynomial $\Sigma_n(X,Y)$ is symmetric individually
on each group of variables $(X_k)_{1\leq k\leq n}$ and $(Y_k)_{1\leq k\leq n}$. Using the Fundamental Theorem on symmetric functions (FTSF) over the coefficient ring
$\ZZ[(X_k)_{1\leq k\leq n}]$ we have that   $\Sigma_n(X,Y)$ is a polynomial
with coefficients in $\ZZ[(X_k)_{1\leq k\leq n}]$ of  $\Sigma_1(Y),\Sigma_2(Y), \ldots ,\Sigma_n(Y)$. Applying a second time to each coefficient the  FTSF over the coefficient ring $\ZZ$ we prove the result.
\end{proof}

\begin{corollary}
 The e\~ne product extends as a binary operation to $\cA(A)$  using the universal formulas and the commutative ring structure $(S(T),.,\star)$ extends to $(\cA(A),.,\star)$.
\end{corollary}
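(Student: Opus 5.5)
The e\~ne product on $\cA(A)$ will be defined coefficientwise: for $f=1+\sum_{k\geq 1} a_kT^k$ and $g=1+\sum_{k\geq 1} b_kT^k$ in $\cA(A)$ we set
$$
f\star g = 1+\sum_{n\geq 1} Q_n(a_1,\ldots,a_n,b_1,\ldots,b_n)\,T^n ,
$$
using the universal polynomials $Q_n$ of the previous Lemma. This is a well defined binary operation because each $Q_n$ has integer coefficients and only involves finitely many coefficients of $f$ and $g$. It agrees with the original product on $S[T]$, since for $f,g\in S[T]$ the coefficients are $a_k=\Sigma_k(X)$ and $b_k=\Sigma_k(Y)$, so the previous Lemma gives $Q_n(a,b)=\Sigma_n(X,Y)$. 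Continuity for the Krull topology is immediate, because the $n$-th coefficient of $f\star g$ depends only on the $n$-jets of $f$ and $g$.

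To transfer the ring axioms, I will introduce fresh families of independent variables $(X_k)_{k\geq 1}$, $(Y_l)_{l\geq 1}$, $(Z_m)_{m\geq 1}$. Each ring axiom (associativity and commutativity of $\star$, distributivity over the multiplication of series, and neutrality of $1-T$) becomes, coefficient by coefficient, a polynomial identity with integer coefficients in the coefficients of finitely many series. For distributivity, for instance, the $n$-th coefficient of $(fg)\star h$ is $Q_n$ evaluated at the coefficients of $fg$, and these are themselves integer polynomials in the coefficients of $f$ and $g$. The crucial point is that every axiom involves only finitely many coefficients at each order, so it suffices to check the corresponding identity on truncations. I will therefore verify each identity in $\ZZ[s_1,\ldots,s_N,t_1,\ldots,t_N,u_1,\ldots,u_N]$, where $s,t,u$ are indeterminates standing for the first $N$ coefficients.

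The main step, and the one I expect to be the real obstacle, is proving these identities on free coefficients. Take $f=\prod_{k\leq N}(1-X_kT)$, and similarly for $g$ and $h$ with the $Y$ and $Z$ variables. These lie in $S[T]$, where the previous Lemma says $(S(T),.,\star)$ is a commutative ring, so every identity holds with $s_k=\Sigma_k(X_1,\ldots,X_N)$, $t_k=\Sigma_k(Y)$, $u_k=\Sigma_k(Z)$. The elementary symmetric functions $\Sigma_1,\ldots,\Sigma_N$ in $N$ variables are algebraically independent over $\ZZ$; this is the uniqueness part of the FTSF. Hence the substitution $s_k\mapsto \Sigma_k(X)$, $t_k\mapsto\Sigma_k(Y)$, $u_k\mapsto\Sigma_k(Z)$ is injective, and an integer polynomial vanishing after this substitution is identically zero. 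The identity therefore holds for arbitrary values of the coefficients in any commutative ring, in particular in $A$.

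Two points need care here. First, the degree bookkeeping: the coefficients of order $\leq n$ must be affected by truncation beyond $N\geq n$ only in a way that is invisible. This holds because $\Sigma_k$ of $N$ variables restricts compatibly, which is the stabilization remark in Lemma \ref{lem:Fundamental}. Second, the additive group structure: $(\cA(A),.)$ is already an abelian group, since inverses of series with constant term $1$ exist with integer polynomial coefficients. With these in place, $(\cA(A),.,\star)$ is a commutative unitary ring extending $(S(T),.,\star)$.
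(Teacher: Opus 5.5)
Your proposal is correct and takes the same route as the paper. The paper's proof is the one-line remark that associativity and commutativity are universal integer polynomial identities in the coefficients, and hence remain true on $\cA(A)$. You supply the step the paper leaves implicit: these identities, known on split products $\prod_{k\leq N}(1-X_kT)$, hold as formal polynomial identities because $\Sigma_1,\ldots,\Sigma_N$ are algebraically independent over $\ZZ$. You also check distributivity and the unit $1-T$, which the paper does not mention.
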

\begin{proof}
The condition of associativity and commutativity are polynomial universal relations with integer coefficients that remain true.
\end{proof}

\begin{lemma}[Exponential form of the e\~ne product.]
We have
{\small
$$
f(T)=\prod_{k=1}^{+\infty} (1-X_k T) =\exp\left (- \sum_{n=1}^{+\infty} \frac1n N_n (f) T^n \right ) \ \  \text{with} \ \
N_n (f)=\sum_{k=1}^{+\infty} X_k^n
$$
}
and $N_n(f\star g)= N_n(f).N_n(g)$.

\end{lemma}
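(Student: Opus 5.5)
The plan is to prove the two claims of the Lemma separately, working in the characteristic-zero ring $A$ where $\log$ and $\exp$ of elements of $1+TA[[T]]$ and $TA[[T]]$ are well defined. For the exponential form I will argue as in Lemma \ref{lemmaexp}. Each factor satisfies
\[
\log(1-X_kT) = -\sum_{n\geq 1} \frac1n X_k^n T^n ,
\]
which is legitimate because $\QQ\subset A$. Since $\log$ turns products into sums, summing over $k$ gives
\[
\log f(T) = -\sum_{n\geq 1}\frac1n\Bigl(\sum_k X_k^n\Bigr)T^n = -\sum_{n\geq 1}\frac1n N_n(f)T^n ,
\]
and exponentiating yields the stated formula.

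One technical point needs care when the sequence of monomials is countable. The infinite product, the double sum and the regrouping must converge in the topology of formal power series in the variables of $A$, and this requires the $X_k$ to be nonconstant monomials, so that only finitely many terms contribute to each fixed coefficient. I take this as implicit in the definition of $S[T]$, as in Lemma \ref{lemmaexp}. Granting it, the rearrangement is justified coefficientwise. I expect this to be the only real obstacle, and it is a bookkeeping issue rather than a conceptual one.

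For the multiplicativity, by the Definition $f\star g=\prod_{k,l}(1-X_kY_lT)$ is again of the form $\prod_m(1-Z_mT)$, where the $Z_m=X_kY_l$ are monomials indexed by pairs $(k,l)$. Applying the first part to this product gives
\[
N_n(f\star g)=\sum_{k,l}(X_kY_l)^n=\Bigl(\sum_k X_k^n\Bigr)\Bigl(\sum_l Y_l^n\Bigr)=N_n(f)\,N_n(g) .
\]
The middle equality is the product of two absolutely summable families in the formal topology, which is exactly the computation in the proof of the formal exponential form of the e\~ne product.

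Finally, $N_n(f)$ must depend only on $f$ and not on the chosen factorization. This follows because the exponential form gives $-\sum_n \frac1n N_n(f)T^n=\log f(T)$, so the $N_n(f)$ are determined by the coefficients of $f$ (equivalently, through Newton's identities, by the $\Sigma_k$). Hence the identity $N_n(f\star g)=N_n(f)N_n(g)$ is intrinsic.
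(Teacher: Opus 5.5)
Your argument is correct and is essentially the paper's proof: expand $\log(1-X_kT)$, regroup by powers of $T$, and use $\bigl(\sum_k X_k^n\bigr)\bigl(\sum_l Y_l^n\bigr)=\sum_{k,l}(X_kY_l)^n$ for the second claim. Your remark that $N_n(f)$ is read off from $\log f$, and so does not depend on the factorization, is a useful addition the paper leaves implicit. One small correction to your convergence caveat: nonconstancy of the $X_k$ is not the right condition (for example $X_k=X_1$ for all $k$ makes $\sum_k X_k^n$ diverge); what you need is that each monomial occurs only finitely often in the family (e.g.\ distinct variables), and then the family $(X_kY_l)$ inherits this property because a monomial has only finitely many monomial divisors.
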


\begin{proof}
Develop $-\log (1-X_kT)$ in the exponential and regroup terms for the first statement.
For the last one we use
{\small
$$
\left (\sum_{k=1}^{+\infty} X_k^n \right ).\left (\sum_{l=1}^{+\infty} Y_l^n \right )
= \left (\sum_{k, l=1}^{+\infty} (X_kY_l)^n \right ) \ .
$$
}
\end{proof}
\begin{lemma}[Ghost exponential form of the Big Witt product.]
We have
{\small
$$
f(T)=\prod_{k=1}^{+\infty} (1-X_k T^k)^{-1} =\exp\left (\sum_{n=1}^{+\infty} \frac1n W_n(f) T^n \right ) \ \  \text{with} \ \ W_n(f) = \sum_{d|n} d X_d^{n/d} \ .
$$
}
and  $W_n(f\star g)= -W_n(f).W_n(g)$.
\end{lemma}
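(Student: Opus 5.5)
The plan is to split the statement into its two parts. The first part, the exponential form of $\prod_{k}(1-X_kT^k)^{-1}$, is a direct computation. The second part, the multiplicativity rule $W_n(f\star g)=-W_n(f).W_n(g)$, will be reduced to the preceding Lemma on $N_n$. The key observation is that both $N_n$ and $W_n$ are, up to sign, just $n$ times the $n$-th coefficient of $\log f$. This makes sense for every $f\in\cA(A)$, because $A$ is a $\QQ$-algebra, so $\log$ and $\exp$ are mutually inverse bijections between $\cA(A)$ and $TA[[T]]$.

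\textbf{Step 1: the exponential form.} As in Lemma \ref{lemmaexp}, I will expand
$$
-\sum_{k\geq 1}\log(1-X_kT^k)=\sum_{k,l\geq 1}\frac1l X_k^l T^{kl},
$$
and regroup the terms with $kl=n$. Since $\frac1l=\frac{k}{n}$, the coefficient of $T^n$ is $\frac1n\sum_{kl=n}kX_k^{l}=\frac1n W_n(X)$. This step is routine.

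\textbf{Step 2: intrinsic definitions of $N_n$ and $W_n$.} For an arbitrary $f\in\cA(A)$ I will define $N_n(f)$ and $W_n(f)$ by
$$
\log f(T)=-\sum_{n\geq1}\frac1n N_n(f)T^n=\sum_{n\geq 1}\frac1n W_n(f)T^n .
$$
By the previous Lemma and Step 1, these agree with the given formulas whenever $f$ is presented in either product form. In particular $W_n(f)=-N_n(f)$ identically. The needed fact that every $f\in\cA(A)$ admits a presentation $\prod_k(1-X_kT^k)^{-1}$ is the Cartier/Lang decomposition already quoted in Section \ref{counterexample}, but with these intrinsic definitions it is not even required.

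\textbf{Step 3: multiplicativity of $N_n$ on all of $\cA(A)$.} The previous Lemma gives $N_n(f\star g)=N_n(f).N_n(g)$ only for $f,g\in S[T]$, whereas the statement concerns the extended product on $\cA(A)$. I expect this to be the main obstacle. I will handle it by universality, exactly as in Corollary \ref{cor:universality}. By the Newton identities, $N_n(f)$ is a universal polynomial with rational coefficients in the first $n$ coefficients $\Sigma_1,\dots,\Sigma_n$ of $f$. The coefficients of $f\star g$ are the universal polynomials $Q_k$ in the coefficients of $f$ and $g$. Hence $N_n(f\star g)-N_n(f)N_n(g)$ is a fixed polynomial $R_n\in\QQ[\Sigma_1(X),\dots,\Sigma_n(X),\Sigma_1(Y),\dots,\Sigma_n(Y)]$. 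This polynomial vanishes when evaluated on the elementary symmetric functions of independent variables $X_1,\dots,X_n,Y_1,\dots,Y_n$. Those elementary symmetric functions are algebraically independent, so $R_n=0$ as a polynomial. Therefore the identity holds after substituting arbitrary coefficients from $A$.

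\textbf{Step 4: conclusion.} Combining Steps 2 and 3 gives
$$
W_n(f\star g)=-N_n(f\star g)=-N_n(f)N_n(g)=-(-W_n(f))(-W_n(g))=-W_n(f).W_n(g).
$$
This is the claimed twisted multiplicativity of the ghost components, consistent with $\check\star=\star_w$.
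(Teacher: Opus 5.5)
Your proposal is correct and follows the paper's proof. Step 1 is the same logarithmic expansion as in Lemma \ref{lemmaexp}, and the second claim comes, as in the paper, from $N_n(f\star g)=N_n(f)N_n(g)$ together with $W_n=-N_n$, since both are, up to sign, $n$ times the $n$-th coefficient of $\log f$. Your Step 3 usefully spells out the universality argument that carries the $N_n$-identity from split $f,g$ to the extended product on $\cA(A)$, which the paper leaves implicit in ``use the previous Lemma''.
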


\begin{proof}
Same as before for the first statement. Use the previous Lemma for the second.
\end{proof}

The Ghost components $(W_n)$ are used  to define the
classical Big Witt product by
$$
W_n(f\star_w g) =W_n(f).W_n(g) \ .
$$
Therefore,  we have constructed $\star_w$ from the e\~ne product:

\begin{theorem}
We have $f\star_w g = (f\star g)^{-1}$.
\end{theorem}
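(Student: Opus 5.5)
The plan is to compare ghost components in the characteristic zero ring $A$. Since $A\supset\QQ$, the map $h\mapsto \log h$ is a bijection from $\cA(A)$ onto $TA[[T]]$. So every $h\in\cA(A)$ is determined by its ghost components $(W_n(h))_{n\geq 1}$ through
$$
h(T)=\exp\left (\sum_{n\geq 1}\frac1n W_n(h)T^n\right ) .
$$
Moreover $W_n(h^{-1})=-W_n(h)$, because $\log(h^{-1})=-\log h$. Hence $W_n(f)W_n(g)=-W_n(f\star g)=W_n\bigl((f\star g)^{-1}\bigr)$ for all $n$, and the Big Witt product defined by $W_n(f\star_w g)=W_n(f)W_n(g)$ gives $f\star_w g=(f\star g)^{-1}$. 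All the content is therefore in the identity $W_n(f\star g)=-W_n(f)W_n(g)$ of the preceding Lemma, which I take as given; it is essentially the first exponential form. Indeed, for $f\in S[T]$ we have $W_n(f)=-N_n(f)$, and the Newton sums are multiplicative under $\star$.

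That identity is established on the generators $S[T]$, so the first real step is to pass from $S[T]$ to all of $\cA(A)$. I would write an arbitrary $f\in\cA(A)$ in ghost form $f=\exp(\sum \frac1n W_n(f)T^n)$. Since $A$ has enough variables, I would then realize $f$ and $g$ as specializations of ``generic'' elements $\prod_k(1-X_kT^k)^{-1}$ and $\prod_l(1-Y_lT^l)^{-1}$. These generic elements are not in $S[T]$ directly, but they are limits, or inverses of limits, of split products. Using the universal polynomials $Q_n$ from the Lemma, together with continuity for the Krull topology (Corollary \ref{cor:universality}), the identity $W_n(f\star g)=-W_n(f)W_n(g)$ extends from $S[T]$ to its closure.

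The step I expect to be the main obstacle is this extension. I would handle it as in the Corollary relating to the Hadamard product, via the operator $\cD f=-Tf'/f$. The identities $\cD(f\star g)=\cD f\odot\cD g$ and $\cD(f^{-1})=-\cD f$ are polynomial in the coefficients with integer coefficients once $\cD$ is expressed through $f'$ and $f^{-1}$. They hold on split elements with independent variables, since there $\cD f=\sum_n N_n(f)T^n$. By the symmetric-function universality argument used for $Q_n$, they therefore hold identically, so on all of $\cA(A)$.

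Finally, $W_n$ is read off from $-\cD$: $\cD h=-\sum_n W_n(h)T^n$. This gives $\cD(f\star_w g)=-\cD f\odot\cD g=\cD\bigl((f\star g)^{-1}\bigr)$. Since $h\mapsto\cD h$ is injective on $\cA(A)$ in characteristic zero, we conclude $f\star_w g=(f\star g)^{-1}$. The statement for arbitrary rings then follows by specializing the integer polynomial identities, as in the earlier Corollary.
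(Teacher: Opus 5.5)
Your proposal is correct and follows the paper's route. The theorem is read off from $W_n(f\star g)=-W_n(f)W_n(g)$, from $W_n(h^{-1})=-W_n(h)$ (i.e.\ $\cD(h^{-1})=-\cD h$ combined with the Hadamard-product corollary), and from injectivity of the ghost map in characteristic zero; your universality transfer of the identity from split elements to all of $\cA(A)$, which really rests on the algebraic independence of the elementary symmetric functions, only makes explicit what the paper leaves implicit. Drop, however, the aside that the generic elements $\prod_k(1-X_kT^k)^{\pm1}$ are limits of split products in $A$. It is false and not needed: matching one of them modulo $T^3$ by $\prod_k(1-M_kT)^{\epsilon_k}$ with monomials $M_k$ would force $\sum_k\epsilon_kM_k^2=\pm(X_1^2+2X_2)$, which is impossible because squares of monomials have only even exponents, much as in the paper's density counterexamples.
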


\begin{corollary}
The result holds for an arbitrary commutative ring $A$.
\end{corollary}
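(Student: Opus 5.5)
The plan is to reduce the statement to a family of polynomial identities with integer coefficients. These identities are already known to hold in the characteristic-zero variable ring $\QQ[[(X_k),(Y_l),\ldots]]$, so they can then be specialized to $A$ through an enveloping-ring argument like the one in the functoriality theorem.

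\textbf{Step 1: encode both products by universal polynomials.} By the Lemma on universal polynomials $Q_n$, the $n$-th coefficient of $f\star g$ is $Q_n(a_1,\ldots,a_n,b_1,\ldots,b_n)$ with $Q_n$ having integer coefficients. The coefficients of an inverse in $1+T R[[T]]$ are integer polynomials in the original coefficients. Hence the coefficients of $(f\star g)^{-1}$ are integer polynomials $R_n(a_1,\ldots,a_n,b_1,\ldots,b_n)$. Likewise, the classical Big Witt product is given coefficientwise by integer polynomials $Q_n^w$. This integrality is standard: it is the statement that $W_n(f\star_w g)=W_n(f)W_n(g)$ determines the coefficients integrally. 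I will take it as part of the definition of $\star_w$ from the cited constructions (Cartier, Lenstra).

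\textbf{Step 2: prove $Q_n^w=R_n$ as polynomials.} First pass to power series with coefficients in the $\QQ$-variable ring, where we may write the logarithm. Then $f\star_w g=(f\star g)^{-1}$ holds for the generic elements: by the previous Theorem, the ghost components of both sides agree. Indeed $W_n((f\star g)^{-1})=-W_n(f\star g)=W_n(f)W_n(g)$, using the Ghost exponential form Lemma. Since we are in characteristic zero, the ghost map is injective: a series in $1+T(\cdot)[[T]]$ is determined by its logarithm. It remains to check that the coefficients $a_i,b_i$ of $f,g$ can be taken algebraically independent, so that the identity of values forces an identity of polynomials. For this, take $f=\prod_k(1-X_kT)$ with independent variables $X_k$. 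The elementary symmetric functions $\Sigma_1(X),\ldots,\Sigma_n(X)$ are algebraically independent over $\QQ$ for enough variables, and likewise for $Y$. So $Q_n^w-R_n$ vanishes on algebraically independent arguments, and hence $Q_n^w=R_n$ in $\ZZ[Z_1,\ldots]$.

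\textbf{Step 3: specialize to $A$.} For arbitrary $f=1+\sum a_iT^i$ and $g=1+\sum b_iT^i$ in $\cA(A)$, the $n$-th coefficients of $f\star_w g$ and $(f\star g)^{-1}$ are $Q_n^w(a,b)$ and $R_n(a,b)$. These agree because the polynomials are equal. This works in any characteristic, since only integer coefficients are evaluated; equivalently, one pushes forward along $\pi_A$ as in the functoriality theorem.

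\textbf{Main obstacle.} The delicate point is Step 2: one must be sure the generic identity really is an identity of integer polynomials in the coefficients. The issue is that it is first proved only on special elements, namely products of linear factors in monomials. The argument rests on the algebraic independence of elementary symmetric functions. I would invoke the Fundamental Theorem on Symmetric Functions to justify it, since that theorem gives uniqueness of the representation. A secondary point is confirming that $\star_w$ indeed has integral structure polynomials; I would cite this rather than reprove it.
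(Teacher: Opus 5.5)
Your proposal is correct and follows the same route as the paper. Both arguments express each side coefficientwise by universal integer polynomials, establish $f\star_w g=(f\star g)^{-1}$ in the characteristic-zero variable ring via ghost components, and then specialize to $A$. Your explicit appeal to the algebraic independence of the elementary symmetric functions fills in a step the paper's terse proof leaves implicit: it makes the generic identity a genuine polynomial identity $Q_n^w=R_n$ in the coefficients, rather than only a statement about split elements obtained by substituting values for the roots $X_k, Y_l$.
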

\begin{proof}
We have universal polynomials $(Q_n^w)$ with
{\small
$Q_n^w\in \ZZ[Z_1,\ldots , Z_n]$ such that
$$
f\star_w g = 1+\sum_{k\geq 1} Q_k^w \, T^k = \left ( 1+\sum_{k\geq 1} Q_k \, T^k\right )^{-1} =(f\star g)^{-1}\ .
$$
}
When the monomials are variables, we can replace $(X_k)_{ k\geq 1}$ and $(Y_k)_{ k\geq 1}$ with arbitrary
elements of the ring $A$, in particular in non-zero characteristic since the polynomials have integer coefficients. The identity remains true for all commutative rings.
\end{proof}

\subsection{Historical origin  of the Big Witt ring.}

The original motivation of Witt was the study of cyclic
field extensions of degree a power of a prime number
$p^n$, which was part of the problems of interest to
Hasse's Number Theory school around Class Field theory (see 
the historical survey by Roquette \cite{Ro} and the biography of E. Witt by 
Kersten \cite{Ke}).
In this context, Witt introduced ring structures and Witt
polynomials associated to a prime number $p$, $(W_{p^n})_{n\geq 1}$
in his work from 1937 \cite{Witt}.

Only many years later the full sequence of Bergman-Witt
polynomials $(W_n)_{n\geq 1}$ and the
Big Witt ring appear, almost simultaneously in different places. First, in 1965
in the first edition of Lang's algebra book \cite{La} as an exercise in
section VIII which gives credit  to Witt for an oral communication. In Lang's exercise, the formula for the Bergman-Witt polynomial appears first in print. In Witt's Collected
papers \cite{Witt2} there is an uncirculated manuscript (dated June 23rd 1964) by Witt from a seminar he gave in Hamburg with the construction of the Big Witt ring, but not the explicit
formula of the Bergman-Witt polynomial.
Then, G.M. Bergman,  in  chapter 26 of Mumford's book published in 1966 \cite{Ber}, gives
a full construction of the Big Witt ring and the Bergman-Witt polynomials
appear in page 180. Bergman was a graduate student at the time.
Later, P. Cartier  in  1967 \cite{Ca} gives a very economical construction
of the Big Witt ring based on Bergman-Witt polynomials (although he leaves all details to the
reader). Cartier cites Bergman indirectly by citing
Mumford's book. Cartier's construction runs along the same lines as the one in Lang's book.
He observes that, for each $n\geq 1$, the map from the product ring $W(A)=A^{\NN^*}$ into $A$
$$
W_n(\mathbf{a}) = \sum_{d|n} d a_d^{n/d}
$$
is a ring morphism.
Also he observes that the map
$\mathbf{E} : W(A) \to \cA(A)$ given by
$$
\mathbf{E} (\mathbf{a}) = \prod_{n\geq 1} (1-a_nT^n)^{-1} =f(T)
$$
is a bijection, that satisfes $ \mathbf{E} (\mathbf{a} + \mathbf{b})= \mathbf{E} (\mathbf{a} ).\mathbf{E} (\mathbf{b} )$
and he defines
$$
f\star_w g =\mathbf{E} ( \mathbf{E}^{-1}(f) . \mathbf{E}^{-1}(g)) \ .
$$
So the Witt multiplication is just the multiplication of ghost components.

To add more confusion, years before, in 1958,
A. Grothendieck, in his work in the theory of Chern classes  \cite{Gro},
introduces the notion of $\lambda$-ring structure which have a Big Witt ring structure.
Grothendieck refers to  explicit universal formulas but he does not provide any,
nor the Bergman-Witt polynomials. Also, in a letter to Mumford years later (31st August 1964 \cite{Mu}), 
he praises Bergman ``I liked also Bergman's Chapter 26–27 \cite{Ber}, and especially his universal Witt scheme, realized as a formal power series
functor'' and ``...since Gabriel's seminar on
formal groups I had the feeling that the Witt rings must also have a $\lambda$-ring structure''
(he is talking here about the classical Witt rings from 1937 Witt's article).
Cartier seems to have been the first one to clarify the relation between the Big Witt ring structure and the $\lambda$-ring structure \cite{Ca}.

Since then, the Big Witt structure has appeared in different
branches of mathematics, many times it went unnoticed by a  collective hallucination.
We have seen the example of Manin and Kurokawa tensor product.
Another apparently not well known example is the relation to the theory of symmetric
functions (see the work of A. Lascoux \cite{Las}). \footnote{Another example is the relation
to the e\~ne product, that the second author experienced first hand in a curious
episode. Around the year
2010, the second author spend one morning at the IHES with  Pierre Cartier explaining to him
the e\~ne product and its analytic properties. The presentation was similar to the one given
in \cite{PM}, but over the field $\CC$ and stressing the analytic properties on finite order
meromorphic functions. But Cartier didn't realize the link with the Big Witt ring!
Sometimes the analytic context hides the
algebraic essence of the subject and conversely.}

The diverse appearance in different contexts of the Big Witt ring is
a clear sign of its universal and rich structure.
For other examples and a rich background information, the reader is invited to go through Hazewinkel's survey of this vast subject  \cite{Ha}.

\textbf{Acknowledgements.} We are grateful to Hendrik Lenstra for numerous corrections and the important contribution in Section 3. He also pointed out to us that Lang's Exercise 
in his Algebra book was printed in 1965, so this sets the first appearance in print of the Bergman-Witt polynomials. We are grateful to George M. Bergman for his kind 
reading, corrections and wise advice. 
We thank Ina Kersten for kindly sharing the reference for her biography of Ernst Witt.


\end{document}